\numberwithin{equation}{section}% makes equat numb contain the section
\newtheorem{Thm}[equation]{Theorem}
\newtheorem*{Thm*}{Theorem}
\newtheorem{Lem}[equation]{Lemma}
\newtheorem{Cor}[equation]{Corollary}
\theoremstyle{remark}
\newtheorem{Exa}[equation]{Example}
\newtheorem{Conv}[equation]{Convention}
\newtheorem{Rem}[equation]{Remark}
\newtheorem{Rec}[equation]{Recollection}
\newtheorem*{Ack}{Acknowledgements}
\newcommand{\nc}{\newcommand}
\nc{\dmo}{\DeclareMathOperator}
\dmo{\coker}{coker}
\dmo{\colim}{colim}
\dmo{\cone}{cone}
\dmo{\End}{End}
\dmo{\Hom}{Hom}
\dmo{\id}{id}
\dmo{\Id}{Id}
\dmo{\Ind}{Ind}
\dmo{\Ker}{Ker}
\dmo{\opname}{op}
\dmo{\smallh}{h}
\dmo{\supp}{supp}
\dmo{\Spc}{Spc}
\dmo{\Spec}{Spec}
\nc{\bbZ}{\mathbb{Z}}
\nc{\cat}[1]{\mathscr{#1}}%or: \nc{\cat}[1]{\mathcal{#1}}
\nc{\EndHere}{\bibliographystyle{alpha}\bibliography{TG-articles}\end{document}}
\nc{\Endcat}[1]{\End_{\cat #1}}
\nc{\Homcat}[1]{\Hom_{\cat #1}}
\nc{\ie}{{\sl i.e.}\ }
\nc{\inv}{^{-1}}
\nc{\isoto}{\overset{\sim}{\,\to\,}}
\nc{\isotoo}{\overset{\sim}{\,\too\,}}
\nc{\op}{^{\opname}}
\nc{\xto}[1]{\xrightarrow{#1}}
\nc{\oto}[1]{\overset{#1}\to}
\nc{\otoo}[1]{\overset{#1}{\,\too\,}}
\nc{\Paul}[1]{{\color{Orange}#1}}%ForestGreen
\nc{\James}[1]{{\color{ForestGreen}#1}}
\nc{\footpaul}[1]{\Paul{\rm[\footnote{\,\Paul{#1}}]}}
\nc{\footjames}[1]{\James{\rm[\footnote{\,\James{#1}}]}}
\nc{\SET}[2]{\big\{\,#1\,\big|\,#2\,\big\}}
\nc{\too}{\mathop{\longrightarrow}\limits}
\nc{\unit}{\mathbb{1}}% unit for \otimes
\dmo{\Ab}{Ab}
\dmo{\Add}{Add}
\dmo{\coev}{coev}
\dmo{\coind}{coind}
\dmo{\Comod}{CoMod}% comodules
\dmo{\Der}{D}% ground notation for derived categories
\dmo{\grmod}{grmod}% f.g. graded modules (Ivo)
\dmo{\GrMod}{GrMod}% graded modules (Ivo)
\dmo{\Ho}{Ho}
\dmo{\incl}{incl}
\dmo{\im}{im}
\dmo{\Loc}{Loc}
\dmo{\modname}{mod}%
\dmo{\Mod}{Mod}% sheaves of modules
\dmo{\pr}{pr}
\dmo{\resname}{res}% name of below
\dmo{\rmH}{H}
\dmo{\rmL}{L}
\dmo{\Res}{Res}
\dmo{\smallperf}{perf}% ground exponent for ``perfect''
\dmo{\stab}{stab}% stable category of fin. gen. mod.
\dmo{\SH}{SH}% ground name for cat of spectra
\dmo{\Stab}{Stab}% stable category of non-fin. gen. mod.
\dmo{\hname}{h}% ground exponent for yoneda functors
\nc{\ababs}{{\sl ab absurdo}}
\nc{\adj}{\dashv}
\nc{\adjto}{\rightleftarrows}
\nc{\aka}{{a.\,k.\,a.}\ }
\nc{\ala}{{\sl \`a la}\ }
\nc{\apriori}{{\sl a priori}}
\nc{\bbC}{\mathbb{C}}
\nc{\bbP}{\mathbb{P}}
\nc{\cf}{{\sl cf.}\ }
\nc{\CComod}{\,\text{-}\Comod}%
\nc{\Dperf}{\Der^{\smallperf}}% derived category of perfect compl
\nc{\eg}{{\sl e.g.}}
\nc{\gm}{\mathfrak{m}}% prime m
\nc{\gp}{\mathfrak{p}}% prime p
\nc{\hook}{\hookrightarrow}
\nc{\ideal}[1]{\langle #1\rangle}
\nc{\ihom}{{\mathsf{hom}}} %{{\underline{\hom}}}
\nc{\ihomcat}[1]{\ihom_{\cat #1}}
\nc{\into}{\mathop{\rightarrowtail}}
\nc{\ind}{\textrm{ind}}%inductionfunctor
\nc{\loccit}{{\sl loc.\ cit.}}
\nc{\mmod}{\text{-\!}\modname}%
\nc{\ggrmod}{\text{-\!}\grmod}%
\nc{\modd}{\modname\text{\!-}}%
\nc{\Modd}{\Mod\text{\!-}}% right modules
\nc{\MMod}{\text{-\!}\Mod}% left modules
\nc{\onto}{\mathop{\twoheadrightarrow}}
\nc{\sminus}{\!\smallsetminus\!}
\nc{\potimes}[1]{^{\otimes #1}}% tensor power
\nc{\qquadtext}[1]{\qquad\textrm{#1}\qquad}
\nc{\quadtext}[1]{\quad\textrm{#1}\quad}
\nc{\res}{\resname}%restrictionfunctor
\nc{\restr}[1]{_{|_{\scriptstyle #1}}}% to restrict a map
\nc{\sbull}{{\scriptscriptstyle\bullet}}%\mathbf{\cdot}}%{}}
\nc{\sstab}{\,\text{--}\stab}%
\nc{\SpcK}{\Spc(\cat K)}% most used
\nc{\SpcTc}{\Spc(\cat T^c)}% most used
\nc{\then}{$\Rightarrow$}
\nc{\bbe}{\mathbb{e}}
\nc{\bbf}{\mathbb{f}}
\nc{\ohook}[1]{\,\overset{#1}\hook\,}
\nc{\oonto}[1]{\,\overset{#1}\onto\,}
\nc{\MS}{\Modd\cat S^c}
\nc{\mS}{\modd\cat S^c}
\nc{\CS}{\cat C(\cat S^c)}
\nc{\MT}{\Modd\cat T^c}
\nc{\MF}{\Modd\cat F^c}
\nc{\mT}{\modd\cat T^c}
\nc{\ML}{\MMod\cat{L}}
\nc{\mL}{\mmod\cat{L}}
\nc{\bL}{\cat B_0(\cat L)}
\nc{\BL}{\cat B(\cat L)}
\nc{\cL}{\cat C_0(\cat L)}
\nc{\CL}{\cat C(\cat L)}
\nc{\cA}{\cat{A}}
\nc{\cB}{\cat{B}}
\nc{\cC}{\cat{C}}
\nc{\cD}{\cat{D}}
\nc{\cF}{\cat{F}}
\nc{\cJ}{\cat{J}}
\nc{\cK}{\cat{K}}
\nc{\cP}{\cat{P}}
\nc{\cS}{\cat{S}}
\nc{\cT}{\cat{T}}
\nc{\cU}{\cat{U}}
\nc{\EB}{E_{\cB}}
\nc{\EBi}{E_{\cB_i}}
\nc{\EBp}{E_{\cB'}}
\nc{\EC}{E_{\cC}}
\nc{\AB}{\bar{\cA}_{\cB}}% residue category at B
\nc{\ABp}{\bar{\cA}_{\cB'}}% residue category at B'
\nc{\AC}{\bar{\cA}_{\cC}}% residue category at C
\nc{\hB}{\boneda_{\cB}}% residue functor at B
\nc{\hBp}{\boneda_{\cB'}}% residue functor at B'
\nc{\hC}{\boneda_{\cC}}% residue functor at C
\nc{\hEB}{\hat E_{\cB}}
\nc{\hEBp}{\hat E_{\cB'}}
\nc{\hEC}{\hat E_{\cC}}
\nc{\bEB}{\bar E_{\cB}}
\nc{\bEBp}{\bar E_{\cB'}}
\nc{\bEC}{\bar E_{\cC}}
\nc{\fp}{^{\textrm{fp}}}% finitely presented
\nc{\tensid}{$\otimes$-ideal}
\nc{\tensids}{$\otimes$-ideals}
\nc{\tensnil}{$\otimes$-nilpotent}
\nc{\yoneda}{\mathbb{h}}
\nc{\boneda}{\bar\yoneda}
\nc{\bat}[1]{\bar{\cat{#1}}}
\nc{\Loct}{\Loc^{\otimes}}
\nc{\ihombat}[1]{\ihom_{\bat #1}}
\nc{\Hombat}[1]{\Hom_{\bat #1}}
\nc{\bunit}{\bar\unit}
\nc{\hunit}{\hat\unit}
\nc{\vcorrect}[1]{{\vphantom{\vbox to #1em{}}}}
\nc{\Spch}{\Spc^{\smallh}}
\nc{\SpcH}{\Spc^{\bigH}}
\nc{\SpchT}{\Spch(\cat{T}^c)}% most used
\nc{\SpcHT}{\SpcH(\cat{T}^c)}% most used
\nc{\SpchS}{\Spch(\cat{S}^c)}% most used
\nc{\SpcHS}{\SpcH(\cat{S}^c)}% most used
\nc{\SpcT}{\Spc(\cat{T}^c)}% most used
\nc{\SpcS}{\Spc(\cat{S}^c)}% most used
\nc{\Supph}{\Supp^{\smallh}}% homological support
\nc{\SuppH}{\Supp^{\bigH}}% homological support
\nc{\SuppBF}{\Supp_{{\rm BF}}}% homological support
\nc{\Snil}{\Supp^{\smallnil}}% nil-support of a morphism
\nc{\tristars}{\begin{center}*\ *\ *\end{center}}
\begin{document}

%------------------------------------------------------------------------------

\title[Computing homological residue fields]{Computing homological residue fields\\ in algebra and topology}
\author{Paul Balmer}
\author{James C.\ Cameron}
\date{\today}

\address{Paul Balmer, UCLA Mathematics Department, Los Angeles CA 90095-1555, USA}
\email{balmer@math.ucla.edu}
\urladdr{http://www.math.ucla.edu/$\sim$balmer}

\address{James C.\ Cameron, UCLA Mathematics Department, Los Angeles CA 90095-1555, USA}
\email{jcameron@math.ucla.edu}
\urladdr{http://www.math.ucla.edu/$\sim$jcameron}

\begin{abstract}
We determine the homological residue fields, in the sense of tensor-triangular geometry, in a series of concrete examples ranging from topological stable homotopy theory to modular representation theory of finite groups.
\end{abstract}

\subjclass[2010]{18D99; 20J05, 55U35}
\keywords{Tensor-triangular geometry, homological residue field}

\thanks{First named author supported by NSF grant~DMS-1901696.}

\maketitle

%------------------------------------------------------------------------------

\section{Introduction}

%------------------------------------------------------------------------------

One important question in tensor-triangular geometry is the existence of residue fields. Just as we can reduce some problems in commutative algebra to problems in linear algebra by passing to residue fields, we would like to study general tensor-triangulated categories via tensor-triangulated fields (in the sense of \Cref{Rec:BKS}). Indeed, in key examples of tensor-triangulated categories such residue fields do exist: In the stable homotopy category we have Morava $K$-theories, in the stable module category of a finite group scheme over a field we can consider $\pi$-points, and of course for the derived category of a commutative ring we have ordinary residue fields.

Even though residue fields exist in those examples, at the moment there is no tensor-triangular construction of them and it is not known if they always exist, beyond very special cases~\cite{Mathew17a}. It is not even known exactly what properties one should expect from a residue field functor~$F\colon\cT\to \cF$ from our tensor-triangulated category of interest~$\cT$ to such a tensor-triangulated field~$\cF$. In particular, some of the examples above fail to give symmetric tensor functors or tensor functors at all.

The recent work~\cite{BalmerKrauseStevenson19,Balmer20a,Balmer20b} introduced and explored \emph{homological residue fields} as an alternative that exists in broad generality and is always tensor-friendly. They consist of symmetric monoidal homological functors
\begin{equation}
\label{eq:hB}%
\hB\colon\cT\to \AB
\end{equation}
from~$\cT$ to `simple' abelian categories~$\AB$. One such functor exists for each so-called \emph{homological prime}~$\cB$, as recalled in \Cref{sec:background}. These homological residue fields collectively detect the nilpotence of maps~\cite{Balmer20a} and they give rise to a support theory for not necessarily compact objects~\cite{Balmer20b}.

Homological residue fields are undeniably useful but they are defined in a rather abstract manner and it is not clear how they relate to the tensor-triangulated residue fields $F\colon\cT\to\cF$ that partially exist in examples. For each homological prime~$\cB$, there is a canonical pure-injective object $\EB$ of $\cT$ that completely determines the homological residue field~\eqref{eq:hB}. Given a functor $F\colon \cT \to \cF$ to a tensor-triangulated field satisfying mild assumptions, there is a corresponding homological prime~$\cB$. (See \Cref{lem:main-diagram}.) In the standard examples of such~$F\colon\cT\to \cF$, it is desirable to determine the corresponding pure-injective~$\EB$ and residue category~$\AB$. Our first contribution is to show that $\EB$ is a direct summand of~$U(\unit)$, where $U$ is right adjoint to~$F$. (The existence of~$U$ is a mild assumption, by Brown Representability.)

\begin{Thm*}[\Cref{thm:EB<U(1)}]
Let $F\colon \cT \to \cF$ be a monoidal exact functor to a tensor-triangulated field and suppose that~$F$ admits a right adjoint~$U$. Then the pure-injective object $\EB \in \cT$ is a direct summand of~$U(\unit)$.
\end{Thm*}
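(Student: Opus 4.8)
The plan is to identify $\EB$ via its universal property as a pure-injective object representing the homological residue field, and to show that $U(\unit)$ already carries this property up to a direct summand. Recall that the homological functor $\hB\colon\cT\to\AB$ factors through the restricted-Yoneda functor $\cT\to\MT$, $X\mapsto \hat X := \Homcat{T}(-,X)|_{\cT^c}$, and that $\EB$ is the pure-injective object whose image $\hEB\in\MT$ is the injective envelope (or, in the relevant language of \cite{BalmerKrauseStevenson19}, the minimal injective) corresponding to the homological prime~$\cB$. The key external input is the relationship between $\cB$ and the field functor $F$ recorded in \Cref{lem:main-diagram}: the homological prime $\cB$ attached to $F$ is the kernel of the induced functor on module categories, so that $\hEB$ sits inside the $\MT$-object built from $F$ and $U$.

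First I would use the adjunction $F\dashv U$ to compute, for every compact $c\in\cT^c$,
\[
\Homcat{T}(c,U(\unit))\;\cong\;\Homcat{F}(F(c),\unit).
\]
Since $\cF$ is a tensor-triangulated field, every object is a coproduct of shifts of $\otimes$-invertible objects, and in particular $\Homcat{F}(F(c),\unit)$ is an injective object of the module category $\Modd\cat F^c$ (this is exactly the sense in which $\cF$ is `simple'). Pulling back along $F$, this shows that $\widehat{U(\unit)}=\Homcat{T}(-,U(\unit))|_{\cT^c}$ is an injective object of $\MT$. Next, because $F(\unit)=\unit$ is a unit and $U$ is lax monoidal, the counit/unit maps give $\unit\to U(\unit)$, and restricted-Yoneda sends this to a map $\hat\unit\to\widehat{U(\unit)}$ in $\MT$ which becomes, after applying the localization $\MT\to\AB$ at~$\cB$, a monomorphism with injective target — hence $\widehat{U(\unit)}$ contains the injective envelope of the image of $\hat\unit$ at~$\cB$, which is precisely $\hEB$. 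Since $\widehat{U(\unit)}$ is injective and $\hEB$ is a direct summand of it in $\MT$, and since restricted-Yoneda is fully faithful on the relevant pure-injective objects (by the theory of pure-injectives in \cite{Balmer20b}), the splitting lifts back to a direct summand decomposition $U(\unit)\simeq \EB\oplus(\text{rest})$ in~$\cT$.

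The main obstacle I expect is the last lifting step: one must be careful that an idempotent (or a direct-summand relation) detected after applying restricted-Yoneda and then localizing at~$\cB$ actually comes from an honest idempotent on $U(\unit)$ in~$\cT$. This is where pure-injectivity of $\EB$ is essential — restricted-Yoneda is fully faithful when the source is pure-injective, so morphisms $\EB\to U(\unit)$ and $U(\unit)\to\EB$ exhibiting the summand in $\MT$ lift uniquely, and one checks the composite $\EB\to U(\unit)\to\EB$ is the identity by checking it in $\MT$. A secondary point requiring care is confirming that the injective $\widehat{U(\unit)}$ really does see $\cB$ (rather than being supported away from it): this follows from the fact that the field functor $F$ is, by hypothesis, nonzero and $\cB$ is by construction the homological prime through which it factors, so $\hat\unit\to\widehat{U(\unit)}$ is nonzero modulo~$\cB$. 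Once these two verifications are in place, the theorem follows.
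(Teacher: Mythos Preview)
Your overall strategy matches the paper's: show that the image of $\unit\to U(\unit)$ in $\AB$ is a monomorphism into an injective, deduce that $\bEB$ (the injective hull of~$\bunit$) splits off $\overline{U(\unit)}$, and then lift the splitting back to~$\cT$ using that homs into pure-injectives are computed by restricted Yoneda. So the architecture is right.

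There is, however, a genuine gap at the step you flag least. You assert that $\hat\unit\to\widehat{U(\unit)}$ ``becomes \ldots\ a monomorphism'' in~$\AB$, but your only justification (in the ``secondary point'') is that the map is \emph{nonzero} modulo~$\cB$. Nonzero is not enough: $\bunit$ need not be simple in~$\AB$, so a nonzero map out of it can have a kernel, and then the injective hull of its image is not $\bEB$. The paper closes this gap with a clean adjunction argument carried out \emph{inside}~$\AB$: for the adjunction $\bar F\dashv\bar U$ the triangle identity makes $\bar F(\bunit)\to\bar F\bar U\bar F(\bunit)$ a split monomorphism, and since $\bar F$ is exact and faithful (it is $\hat F$ factored through its own kernel), it reflects monomorphisms; hence $\bunit\to\bar U(\bunit)=\overline{U(\unit)}$ is a monomorphism. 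You should insert exactly this.

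Two smaller points. First, the detour through $\MT$ is unnecessary and forces you to argue that a splitting in $\AB$ transports to one in $\MT$; the paper avoids this by lifting directly from $\AB$ to~$\cT$ via the isomorphism $\Hom_{\cT}(X,I)\cong\Hom_{\AB}(\bar X,\bar I)$ for $\bar I$ injective in~$\AB$ \cite[Corollary~2.18(c)]{BalmerKrauseStevenson19}. Second, your phrasing ``restricted Yoneda is fully faithful when the source is pure-injective'' is backwards: it is the \emph{target} being pure-injective that gives the isomorphism on Hom-sets. Since both $\EB$ and $U(\unit)$ are pure-injective this does not damage your argument, but it should be stated correctly.
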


In the examples of the topological stable homotopy category and of the derived category of a commutative ring, this $U(\unit)$ is respectively a Morava $K$-theory spectrum and a residue field. These are indecomposable objects. So we conclude that for these examples the potentially mysterious $\EB$ coincides with the better understood ring object~$U(\unit)$. See \Cref{cor:EB-Morava} and~\Cref{cor:EB-ring}. The latter generalizes the case of a discrete valuation ring obtained in~\cite[Example~6.23]{BalmerKrauseStevenson19}.

In the case of the stable module category of a finite group scheme, there is a complication because the best candidates for tensor-triangulated fields -- associated to $\pi$-points -- are almost never monoidal functors for the tensor product coming from the group Hopf algebra structure. However, in the case of the stable module category of an elementary abelian $p$-group with the $p$-restricted Lie algebra tensor product structure, $\pi$-points do give tensor functors and we show in \Cref{cor:EB-pipoints} that in this case the pure-injective $\EB$ is again determined by the~$\pi$-point.

\smallbreak

In addition to understanding the pure-injective object $\EB$ we would like to describe the abelian category~$\AB$ in which a homological residue field takes its values. When $\cB$ arises from a tensor-triangulated field $F\colon\cT\to\cF$ we would like to relate $\AB$ and~$\cF$. Our second contribution is to do just this. The adjunction $F \dashv U$ gives a comonad $FU$ on~$\cF$. This comonad $FU$ then gives a comonad $\widehat{FU}$ on the abelian category of (functor) $\cF^c$-modules, where $\cF^c$ denotes the compact objects of~$\cF$; see details in \Cref{sec:background}. The Eilenberg-Moore category of comodules (\aka coalgebras) for this comonad~$\widehat{FU}$ is precisely the homological residue field~$\AB$:

\begin{Thm*}[\Cref{thm:AB-as-comodules}]
Let $F\colon \cT \to \cF$ be a monoidal functor with right adjoint~$U$, where $\cF$ is a tensor-triangulated field. The category of comodules for the comonad $\widehat{FU}$ on the functor category of $\cF^c$-modules is equivalent to the homological residue field $\AB$ corresponding to~$F$.
\end{Thm*}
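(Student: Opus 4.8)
The plan is to exhibit $\AB$ as $\Comod_{\widehat{FU}}(\Modd\cF^c)$ by applying Beck's comonadicity theorem to a single functor out of $\AB$. Recall from \Cref{sec:background} and \Cref{lem:main-diagram} the ingredients. Write $\widehat F=\Modd F^c\colon\Modd\cT^c\to\Modd\cF^c$ for the induced functor and let $\widehat U=\Modd U^c$ be its right adjoint, so that $\widehat F\widehat U$ is the comonad $\widehat{FU}$ of the statement; the square $\yoneda_\cF\circ F\cong\widehat F\circ\yoneda_\cT$ commutes, $\widehat F$ is exact and preserves coproducts, the homological prime $\cB$ associated to $F$ is the kernel $\SET{M}{\widehat F M=0}$, and $\AB$ (in the form relevant here) is the quotient $\Modd\cT^c/\cB$, with $\hB$ the composite of $\yoneda_\cT$ with the quotient functor $Q\colon\Modd\cT^c\onto\AB$. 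Since $\widehat F$ is exact, preserves coproducts, and is defined on the Grothendieck category $\Modd\cT^c$, its kernel $\cB$ is a localizing Serre subcategory; hence $\AB$ is again Grothendieck and $Q$ admits a fully faithful right adjoint $s$ with $Qs\cong\id$. Because $\widehat F$ is exact with kernel precisely $\cB$, it factors as $\widehat F=J\circ Q$ with $J\colon\AB\to\Modd\cF^c$ exact and faithful: if $J(Qf)=0$ then, $\widehat F$ being exact, $\widehat F(\im f)=\im\widehat F(f)=0$, so $\im f\in\cB$ and $Qf=0$.

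Next I would verify the comonadicity hypotheses for $J$. The functor $J$ preserves small colimits: a colimit in $\AB$ of a diagram $d$ is computed as $Q$ applied to the colimit in $\Modd\cT^c$ of $s\circ d$, and $\widehat F=JQ$ and $Q$ preserve colimits while $Qs\cong\id$. Since $\AB$ and $\Modd\cF^c$ are locally presentable, the adjoint functor theorem then provides a right adjoint $J'$ to $J$. Moreover $J$ is a faithful exact functor between abelian categories, hence reflects isomorphisms; and $\AB$, being abelian, has all equalizers, which $J$ preserves as it is exact. By Beck's comonadicity theorem, $J$ is therefore comonadic, so the comparison functor is an equivalence $\AB\isoto\Comod_{JJ'}(\Modd\cF^c)$.

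It remains to identify the comonad $JJ'$ with $\widehat{FU}=\widehat F\widehat U$. Since $\widehat F=J\circ Q$ with $J\dashv J'$ and $Q\dashv s$, the composite $s\circ J'$ is right adjoint to $\widehat F$, so $\widehat U\cong sJ'$; hence $\widehat F\widehat U=JQsJ'\cong JJ'$, and this isomorphism respects the comonad structures because it simply expresses the adjunction $\widehat F\dashv\widehat U$ as the composite of $Q\dashv s$ and $J\dashv J'$ together with $Qs\cong\id$. Therefore $\Comod_{\widehat{FU}}(\Modd\cF^c)\simeq\Comod_{JJ'}(\Modd\cF^c)\simeq\AB$, and unwinding definitions one checks that the equivalence carries $\hB\colon\cT\to\AB$ to the functor sending $X$ to $\yoneda_\cF(FX)=\widehat F\yoneda_\cT(X)$ with its canonical $\widehat{FU}$-comodule structure. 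The main obstacle is really in importing and double-checking the inputs from \Cref{sec:background} and \Cref{lem:main-diagram} — that $\cB$ is exactly $\ker\widehat F$ and that $\widehat F$ is exact and coproduct-preserving — together with the (routine, but necessary) verifications that $J$ is faithful and cocontinuous and that Beck's hypotheses are met; once these are in place the argument is formal. A more hands-on alternative, bypassing the adjoint functor theorem, is to describe $J'$ and the comparison explicitly, $J'(D,\rho)=\mathrm{eq}\bigl(\widehat U D\rightrightarrows\widehat U\widehat F\widehat U D\bigr)$, and to verify directly, using exactness of $\widehat F$ and the comonad identities, both that this is right adjoint to $J$ and that the comparison is fully faithful and essentially surjective.
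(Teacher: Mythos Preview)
Your argument is correct and is essentially the paper's own proof: your functor $J$ is exactly the paper's $\bar F$ from \Cref{lem:main-diagram}, your $J'$ is $\bar U$, your $s$ is $R$, and the core step is Beck comonadicity applied to the faithful exact functor $\bar F\colon\AB\to\Modd\cF^c$, followed by the identification $\bar F\bar U\cong\hat F\hat U=\widehat{FU}$ via $QR\cong\id$. The only substantive difference is that you rebuild the right adjoint $J'$ via the adjoint functor theorem, whereas the paper already has $\bar U$ in hand from \Cref{lem:main-diagram}; one small slip to fix is that $\hat U$ is not literally $\Modd U^c$ (the right adjoint $U$ need not preserve compacts), but is simply the right adjoint to the cocontinuous $\hat F$, which is all you use.
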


In cases where the tensor-triangulated field $\cF$ in question is semisimple, such as the examples in topology and commutative algebra, this abelian category of~$\cF^c$-modules is just $\cF$ itself. So the homological residue field $\AB$ is the category of comodules for an explicit comonad on~$\cF$ itself.
Let us rephrase this result in heuristic terms. When the residue field functor $F\colon\cT\to \cF$ takes values in a semisimple triangulated category~$\cF$ (\ie one that is also abelian), then the abstractly constructed~$\AB$ contains more information than~$\cF$. There is a faithful exact functor $\AB\to \cF$ but objects of~$\AB$ `remember' more information than just being an object of~$\cF$, they remember that they come from~$\cT$ via~$F$. This additional information is encoded in the comodule structure with respect to~$FU\colon\cF\to\cF$.

\smallbreak
\Cref{sec:background} sets up the stage. We prove our two theorems in Sections~\ref{sec:EB-examples} and~\ref{sec:comodules}.

\begin{Ack}
We thank Jacob Lurie for an interesting discussion, that made us realize that the monadic adjunction of~\cite[\S\,6]{BalmerKrauseStevenson19} also satisfied \emph{co}monadicity. We also thank Tobias Barthel, Ivo Dell'Ambrogio and Greg Stevenson for useful comments.
\end{Ack}

%------------------------------------------------------------------------------

\section{Background on homological residue fields}
\label{sec:background}%

%------------------------------------------------------------------------------

In this section we recall some definitions and properties of the category of modules of a tensor-triangulated category and of homological residue fields. By a \emph{big tt-category}~$\cT$, we mean a compact-rigidly generated tensor-triangulated category, as in~\cite{BalmerKrauseStevenson19}. This means that $\cT$ admits all small coproducts, that its compact objects (those $x$ such that $\Homcat{T}(x,-)$ commutes with coproducts) and rigid objects (those~$x$ that admit a dual~$x^\vee$ for~$\otimes$) coincide and form an essentially small subcategory denoted~$\cT^c$ that moreover generates~$\cT$ as a localizing category.

\begin{Rec}
\label{Rec:BKS}%
For $\cT$ a big tt-category, the (functor) category of $\cT^c$-modules $\cA=\MT$ is the category of additive functors from $(\cT^c)\op$ to abelian groups. This is a Grothendieck category, hence admits enough injectives. The subcategory of finitely presented objects of $\cA$ is denoted $\cA\fp=\mT$. We have a restricted Yoneda functor $\yoneda\colon \cT \to \cA$ defined by $\yoneda(X)(-)=\hat X(-):=\Homcat{T}(-,X)\restr{\cT^c}$ for every~$X\in\cT$. This gives a commutative diagram
\[
\xymatrix@H=1em@R=1.2em{
\cT^c {}_{\vphantom{j}} \ \ar@{^{(}->}[d] \ar@{^{(}->}[r]^-{\yoneda}
& \cA\fp \ar@{^{(}->}[d] {}_{\vphantom{j}}
\\
\cT \ar[r]^-{\yoneda} & \cA
}
\]
whose first row is the usual Yoneda embedding for~$\cT^c$.

For $\cB$ a \emph{homological prime}, \ie a maximal Serre tensor-ideal of~$\cA\fp$, we obtain the homological residue field $\AB$ as the Gabriel quotient $Q\colon \cA\onto\AB:=\cA / \Loc(\cB)$. In that quotient~$\AB$ we write $\bar X$ instead of~$Q(\hat X)$, for~$X\in\cT$.
In $\AB$ we can take the injective hull of the unit and by~\cite[\S\,3]{BalmerKrauseStevenson19} this is of the form $\overline{\EB}$ for a canonical pure-injective $E_{\cB}$ of~$\cT$. Furthermore we have $\Loc(\cB)=\Ker(\hEB\otimes-)$ in~$\cA$.

A \emph{tt-field} is a big tt-category $\cF$ such that every object of $\cF$ is a coproduct of compact objects and such that tensoring with any object is faithful. Equivalently by~\cite[Theorem 5.21]{BalmerKrauseStevenson19}, a big tt-category $\cF$ is a tt-field if and only if for every non-zero $X\in \cF$ the internal hom functor $\hom_{\cF}(-,X)\colon \cF\op\to\cF$ is faithful.
\end{Rec}

We now summarize and mildly generalize some results from~\cite{BalmerKrauseStevenson19,Balmer20b}.
\begin{Lem}\label{lem:main-diagram}
Given a big tensor-triangulated category $\cT$, a tensor-triangulated field~$\cF$, and a monoidal exact functor $F\colon \cT \to \cF$ with right adjoint~$U$, we have the following diagram:
\begin{equation}\label{eq:main}
\vcenter{\xymatrix@H=1.2em@R=1.2em{
 \cat T \ar[r]^-{\yoneda}  \ar@<-.2em>[dd]_-{F}  & \MT=\cA \ar@<-.2em>[dd]_-{\hat{F}} \ar@<-.2em>[dr]_-{Q}  & \\
 & &  \MT/ \Ker(\hat{F})=\AB \ar@<-.2em>[ul]_-{R}  \ar@<-.2em>[dl]_-{\bar{F}} \\
 \cat F  \ar[r]^-{\yoneda} \ar@<-.2em>[uu]_-{U} & \MF \ar@<-.2em>[uu]_-{\hat{U}} \ar@<-.2em>[ur]_-{\bar{U}} &
}}
\end{equation}
in which $\hat{F}$ is the exact cocontinuous functor induced by $F$, the functor $Q$ is the Gabriel quotient with respect to~$\Ker(\hat F)$ and the functor $\bar{F}$ is induced by the universal property, hence $\hat F=\bar F\,Q$ and $\bar{F}$ is exact and faithful. We have adjunctions $F \dashv U$, $\hat{F} \dashv \hat{U}$, $\bar{F} \dashv \bar{U}$, and $Q \dashv R$, as depicted. Also, $\hat F\,\yoneda=\yoneda\,F$ and $\hat U\,\yoneda=\yoneda\,U$.

We have that $\cB:=\Ker(\hat{F})\cap\cA\fp$ is a homological prime and $\Ker(\hat F)=\Loc(\cB)$. Therefore $\hB=Q\circ\yoneda\colon \cT\to\AB$ is a homological residue field of~$\cT$. \end{Lem}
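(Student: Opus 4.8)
The plan is to construct the diagram left to right, building each functor and verifying each adjunction in turn, and only at the end to identify the kernel $\Ker(\hat F)$ as a localizing subcategory generated by a homological prime. The very first step is to produce $\hat F$ and check $\hat F\,\yoneda = \yoneda\,F$: since $\MT$ is the free cocompletion of $\cT^c$ under filtered colimits (it is the category of additive presheaves on $\cT^c$), the restriction $F|_{\cT^c}\colon \cT^c\to \cF^c \hookrightarrow \cF\xto{\yoneda}\MF$ extends uniquely to a cocontinuous functor $\hat F\colon \MT\to\MF$; that this $\hat F$ is exact follows because $F$ sends $\cT^c$ to $\cF^c$ (rigidity plus monoidality of $F$ ensure compacts go to compacts) so the induced functor is restriction-of-scalars-like and hence exact, and the compatibility $\hat F\,\yoneda = \yoneda\,F$ is checked on compacts and extends by cocontinuity since $\yoneda\colon\cT\to\MT$ preserves filtered colimits along $\cT^c$. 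The right adjoint $\hat U$ exists because $\hat F$ is a cocontinuous functor between Grothendieck categories (adjoint functor theorem), and $\hat U\,\yoneda = \yoneda\,U$ is the statement that restricted Yoneda intertwines $U$ and $\hat U$, which one verifies by the Yoneda-type computation $\hat U(\hat Y)(x) = \Homcat{F}(\hat F(\hat x), \hat Y)$ for $x\in\cT^c$, $Y\in\cF$, using $F\dashv U$ and the fact that $\hat Y(Fx) = \Homcat{F}(Fx, Y) = \Homcat{T}(x, UY) = \widehat{UY}(x)$.

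Next I would record that $\Ker(\hat F)$ is a Serre subcategory closed under the $\MT$-action (a $\otimes$-ideal in the appropriate sense) and localizing — Serre because $\hat F$ is exact, localizing because $\hat F$ is cocontinuous hence $\Ker(\hat F)$ is closed under coproducts, and a $\otimes$-ideal because $\hat F$ is monoidal (monoidality of $\hat F$ follows from monoidality of $F$ on compacts, extended by cocontinuity, since $\otimes$ on $\MT$ is the Day convolution which is cocontinuous in each variable). This gives the Gabriel quotient $Q\colon\MT\onto\MT/\Ker(\hat F)$ with its fully faithful right adjoint $R$, and by the universal property of the quotient $\hat F$ factors uniquely as $\hat F = \bar F\,Q$ with $\bar F$ exact; faithfulness of $\bar F$ is automatic since $Q$ kills exactly $\Ker(\hat F) = \Ker(\bar F\,Q)$. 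The remaining adjunction $\bar F\dashv\bar U$ is then assembled formally: $\bar U := Q\,\hat U\,R$ works once one checks the unit/counit triangle identities, or alternatively $\bar F$ is cocontinuous between Grothendieck categories so has a right adjoint which one identifies with $Q\hat UR$.

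The substantive step, and the one I expect to be the main obstacle, is showing that $\cB := \Ker(\hat F)\cap\MT = \Ker(\hat F)\cap\cA\fp$ is a \emph{maximal} Serre $\otimes$-ideal of $\mT=\cA\fp$ and that $\Ker(\hat F) = \Loc(\cB)$. For maximality the natural argument is: the quotient $\AB = \MT/\Ker(\hat F)$, or rather its finitely presented part, is the ``homological residue field'' and one must show it has no proper nonzero Serre $\otimes$-ideals; here the hypothesis that $\cF$ is a tt-field is essential — the faithfulness of $\hom_\cF(-,X)$ for all nonzero $X$ (the characterization quoted in \Cref{Rec:BKS}) is exactly what forces the corresponding homological category to be ``simple'' in the sense of having no nontrivial Serre $\otimes$-ideals, and this is presumably where the bulk of \cite{BalmerKrauseStevenson19, Balmer20b} gets invoked. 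For the equality $\Ker(\hat F) = \Loc(\cB)$: the inclusion $\supseteq$ is clear since $\Ker(\hat F)$ is localizing and contains $\cB$; for $\subseteq$ one uses that every object of $\MT$ is a filtered colimit of finitely presented ones, that $\Ker(\hat F)$ is closed under such colimits and under subobjects/quotients among finitely presented objects, reducing to showing a finitely presented object in $\Ker(\hat F)$ already lies in $\cB$ — which is the definition of $\cB$ — together with a standard argument that a localizing subcategory of a locally coherent (or locally finitely presented) Grothendieck category which is generated by a Serre subcategory of finitely presented objects is recovered as $\Loc$ of that subcategory. Once $\cB$ is a homological prime and $\Ker(\hat F) = \Loc(\cB)$, the identification $\hB = Q\circ\yoneda$ with the homological residue field associated to $\cB$ is immediate from the definitions in \Cref{Rec:BKS}, since $Q$ is precisely the Gabriel quotient by $\Loc(\cB)$.
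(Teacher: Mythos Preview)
Your proposal is essentially correct and unpacks what the paper's proof merely cites: the paper defers the entire construction to \cite[Constructions~6.10 and~6.18, Proposition~6.17]{BalmerKrauseStevenson19} and \cite[Appendix~A]{Balmer20b}, noting only that those arguments go through with $F$ monoidal (rather than symmetric monoidal) and without Condition~(3) of \cite[Hypothesis~6.1]{BalmerKrauseStevenson19}. Your sketch reproduces the content of those references, and you correctly identify the maximality of~$\cB$ as the one substantive point genuinely requiring the tt-field hypothesis on~$\cF$.

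Two small imprecisions are worth flagging, neither fatal. First, your explanation of why $\hat F$ is exact (``restriction-of-scalars-like'') is backwards: $\hat F$ is the \emph{left} Kan extension along Yoneda, and its exactness comes from $F$ being an exact functor of triangulated categories via the universal property of the module category as the target of the universal homological functor; it is $\hat U$ that is restriction along~$F$ (which is also how one sees $\hat U\,\yoneda=\yoneda\,U$, as you compute). Second, your argument for $\Ker(\hat F)=\Loc(\cB)$ does not quite close: writing an arbitrary $M\in\Ker(\hat F)$ as a filtered colimit of finitely presented $M_i$ does not by itself force the $M_i$ into $\Ker(\hat F)$. What is actually used is that $\hat F$ preserves finitely presented objects (since $F$ preserves compacts) and $\hat U$ preserves filtered colimits (being restriction), so $\Ker(\hat F)$ is a localizing subcategory \emph{of finite type} in the locally coherent category~$\MT$; such subcategories are generated by their finitely presented part, which is the ``standard argument'' you allude to but which needs this finite-type input. (Also, your formula $\bar U=Q\hat U R$ does not typecheck; the correct formula is $\bar U=Q\hat U$, which works because $\hat U$ lands in the $\Ker(\hat F)$-local objects---your fallback to the adjoint functor theorem is fine regardless.)
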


\begin{proof}
This is  a more general version of~\cite[Diagram~6.19]{BalmerKrauseStevenson19}, whose construction works without assuming~$F$ symmetric monoidal, just monoidal.
We do not need Condition~(3) in~\cite[Hypothesis~6.1]{BalmerKrauseStevenson19} either, as this was later used only to guarantee faithfulness of~$\bar{U}$, which we do not use here.
Specifically, see~\cite[Construction 6.10]{BalmerKrauseStevenson19}, for the relations $\hat F\,\yoneda=\yoneda\,F$ and $\hat U\,\yoneda=\yoneda\,U$. The remaining relations, involving~$\cB$ and $\AB$, can be found in~\cite[Proposition~6.17 and Construction 6.18]{BalmerKrauseStevenson19}.
The fact that $\cB$ is a homological prime can be found in~\cite[Appendix~A]{Balmer20b}.
Note that $U$ is cocontinuous because $F$ preserves compact (\ie rigid) objects. This gives~$\hat U$ and $\widehat{FU}\cong\hat F\,\hat U$.
\end{proof}

\begin{Rem}
It is important that $F$ is only assumed to be monoidal but not symmetric monoidal. In other words $F(X\otimes Y)$ is naturally isomorphic to~$F(X) \otimes F(Y)$, but this isomorphism is not necessarily compatible with the braiding. In some of our examples the functor $F$ cannot be made symmetric monoidal but it is monoidal. The reason we ask that $F$ is monoidal is so that the kernel of $\hat{F}$ is a tensor-ideal. The monoidality of $F$ is a reasonable condition that guarantees this but because $F$ is \emph{not} monoidal in other examples (\eg\ in modular representation theory) there may be better hypotheses to be discovered.
\end{Rem}

%------------------------------------------------------------------------------

\section{The pure-injective $\EB$ in examples}
\label{sec:EB-examples}%

%------------------------------------------------------------------------------

We now would like to study the pure-injective object $\EB$ that determines the homological residue field corresponding to~$F\colon \cT\to \cF$ as in \Cref{sec:background}.

\begin{Thm}
\label{thm:EB<U(1)}%
With the hypotheses as in \Cref{lem:main-diagram}, the pure-injective object $U(\unit)$ in~$\cT$ admits~$\EB$ as a direct summand.
\end{Thm}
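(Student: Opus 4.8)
The plan is to produce the direct-summand relation $\EB\mid U(\unit)$ first inside the Grothendieck category $\cA=\MT$ and then transport it back to $\cT$ along the restricted Yoneda functor; the bridge is the injective hull of the unit in $\AB$. I would start by assembling the relevant structure from Diagram~\eqref{eq:main}. Since $\cF$ is a tt-field, the object $\hunit=\yoneda(\unit_\cF)\in\MF$ is injective (every object of a tt-field is pure-injective, cf.~\cite{BalmerKrauseStevenson19}). The functor $\hat U$ is right adjoint to the exact functor $\hat F$, hence preserves injectives, so $\widehat{U(\unit)}=\yoneda\,U(\unit)=\hat U\,\yoneda(\unit_\cF)=\hat U(\hunit)$ is injective in $\cA$; this also reconfirms that $U(\unit)$ is pure-injective. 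Because $\hat F=\bar F\,Q$ with $Q\dashv R$ and $\bar F\dashv\bar U$, the right adjoint factors as $\hat U\cong R\,\bar U$, so $\widehat{U(\unit)}\cong R\big(\bar U(\hunit)\big)$. Finally, monoidality of $F$ makes $\Ker(\hat F)$ a tensor-ideal, so $\AB$ is monoidal with unit $\bunit=Q(\yoneda(\unit_\cT))$ and $\bar F$ is monoidal; thus $\bar F(\bunit)=\hat F(\yoneda(\unit_\cT))=\yoneda(F(\unit_\cT))\cong\hunit$, and by the very construction of $\EB$ recalled in \Cref{Rec:BKS}, $R$ carries the injective hull of $\bunit$ in $\AB$ to $\hEB=\yoneda(\EB)$.

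The heart of the argument is to show that the injective hull of $\bunit$ in $\AB$, call it $I$, is a direct summand of $\bar U(\hunit)$. First, $\bar U(\hunit)$ is injective in $\AB$, since $\bar U$ is right adjoint to the exact functor $\bar F$ and $\hunit$ is injective in $\MF$. Second, the unit of the adjunction $\bar F\dashv\bar U$ gives a morphism $\eta\colon\bunit\to\bar U\,\bar F(\bunit)\cong\bar U(\hunit)$, and I claim it is a monomorphism: by the triangle identity $\bar F(\eta)$ is a split monomorphism, so $\bar F(\Ker\eta)\cong\Ker(\bar F\eta)=0$ (using that $\bar F$ is exact), whence $\Ker\eta=0$ because $\bar F$ is faithful. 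Thus $\bunit$ embeds into the injective object $\bar U(\hunit)$, so its injective hull $I$ embeds there as well and, being injective, splits off: $\bar U(\hunit)\cong I\oplus C$ in $\AB$.

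It then remains to descend to $\cT$. Applying the additive functor $R$ and using the identifications above gives $\widehat{U(\unit)}\cong R(\bar U(\hunit))\cong R(I)\oplus R(C)\cong\hEB\oplus R(C)$ in $\cA$. Here $\widehat{U(\unit)}$ and $\hEB$ are the restricted-Yoneda images of the pure-injectives $U(\unit)$ and $\EB$, and $R(C)\cong\yoneda(E')$ for some pure-injective $E'$ by the standard bijection between pure-injective objects of $\cT$ and injective objects of $\cA$; since $\yoneda$ preserves finite biproducts, $\widehat{U(\unit)}\cong\yoneda(\EB\oplus E')$, and as the restriction of $\yoneda$ to pure-injectives is fully faithful (and $\cT$ is idempotent complete), this isomorphism is realized by an isomorphism $U(\unit)\cong\EB\oplus E'$ in $\cT$, exhibiting $\EB$ as a direct summand of $U(\unit)$.

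I expect the only genuine obstacle to lie in the middle step: confirming that $\bar U(\hunit)$ is injective and that $\eta$ is monic. This is precisely where the hypotheses are used — that $\cF$ is a tt-field (so $\hunit$ is injective in $\MF$) and that $\bar F$ is exact and faithful. The factorization $\hat U\cong R\bar U$, the identity $\bar F(\bunit)\cong\hunit$, and the descent along the restricted Yoneda functor are then formal.
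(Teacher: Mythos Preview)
Your proof is correct and follows essentially the same approach as the paper: both show that the unit $\eta\colon\bunit\to\bar U(\unit_{\MF})$ is a monomorphism in $\AB$ via the triangle identity and faithfulness of $\bar F$, deduce that the injective hull $\bEB$ of $\bunit$ splits off the injective $\bar U(\unit)$, and then transport this splitting back to $\cT$ using the full-faithfulness of restricted Yoneda on pure-injectives. The paper is slightly more concise, working directly in $\AB$ with $\overline{U(\unit)}$ rather than factoring through $\MF$ and $\cA$ via $\hat U\cong R\bar U$, but this is a cosmetic difference.
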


\begin{proof}
That $U(\unit)$ is pure-injective is~\cite[Lemma 6.12]{BalmerKrauseStevenson19}. It corresponds to the injective $\bar{U}(\unit)=\overline{U(\unit)}$ in~$\AB$.

By the $\bar{F} \dashv \bar{U}$ adjunction we have that $\bar{F}(\unit) \to \bar{F}\bar{U}\bar{F}(\unit)$ is a monomorphism. As $\bar{F}$ is faithful, we deduce that the unit $\unit \to \bar{U} \bar{F}(\unit)=\bar{U}(\unit)$ is a monomorphism in~$\AB$. But $\unit\to\bEB$ is the injective hull in~$\AB$. Hence $\bar{U}(\unit)=\bEB \oplus \bar{X}$ for some pure-injective~$X\in\cT$.

Recall by \cite[Corollary~2.18\,(c)]{BalmerKrauseStevenson19} that for any injective $\bar I$ in $\AB$ and any $X\in \cT$ we have that $\Hom_{\cT}(X,I)\cong\Hom_{\AB} (\bar{X},\bar{I})$. So, since $\bar{U}(\unit)$ has $\bEB$ as a summand in $\AB$, it follows that $U(\unit)$ also has $\EB$ as a summand in~$\cT$.
\end{proof}

Now, in several examples $U(\unit)$ is a familiar object and it does not have any non-trivial summands.

\begin{Lem}\label{lem:indecomposability}
Suppose that $\cT$ is a tensor-triangulated category that is generated by its~$\otimes$-unit~$\unit$. For $E \in \cT$ if $\pi_*E:=\Hom(\Sigma^*\unit,E)$ is indecomposable as a $\pi_*\unit$-module then $E$ is indecomposable in~$\cT$.
\end{Lem}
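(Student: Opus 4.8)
The plan is to show that any decomposition $E = E_1 \oplus E_2$ in $\cT$ forces a corresponding decomposition of $\pi_*E$ as a $\pi_*\unit$-module, so that indecomposability of $\pi_*E$ gives $E_1 = 0$ or $E_2 = 0$. The functor $\pi_* = \Hom(\Sigma^*\unit, -)$ is additive, so a splitting $E = E_1 \oplus E_2$ immediately yields $\pi_*E = \pi_*E_1 \oplus \pi_*E_2$ as graded abelian groups; moreover this splitting is visibly compatible with the action of $\pi_*\unit$ (the action is induced by composition/tensor with $\Sigma^*\unit$, which respects direct sums), so it is a splitting of $\pi_*\unit$-modules. Hence $\pi_*E_1 = 0$ or $\pi_*E_2 = 0$, say the latter.

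It remains to deduce $E_2 = 0$ from $\pi_*E_2 = 0$. This is where the hypothesis that $\cT$ is generated by $\unit$ enters. The full subcategory of objects $Y$ with $\pi_*Y = 0$, i.e. $\Hom(\Sigma^n\unit, Y) = 0$ for all $n \in \bbZ$, is a localizing subcategory of $\cT$: it is closed under suspensions, cones, and coproducts since $\Hom(\Sigma^n\unit, -)$ is a homological functor commuting with coproducts (as $\unit$ is compact — it generates, so $\cT$ is compactly generated with generator $\unit$). This subcategory does not contain $\unit$ unless $\pi_*\unit = 0$, but $\unit \notin \langle\text{this subcategory}\rangle$ precisely because... actually the cleaner statement: the objects right-orthogonal to $\{\Sigma^n\unit\}_{n\in\bbZ}$ form a localizing subcategory whose only object is $0$, since $\{\Sigma^n\unit\}$ is a set of compact generators. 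Therefore $\pi_*E_2 = 0$ implies $E_2 = 0$, and $E$ is indecomposable.

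The one point requiring a little care is the compatibility of the splitting $\pi_*E = \pi_*E_1 \oplus \pi_*E_2$ with the $\pi_*\unit$-module structure, and dually that "indecomposable as a module" is the right notion to feed in: one should note that a nontrivial idempotent $e \in \End_{\cT}(E)$ produces an idempotent endomorphism $\pi_*(e)$ of $\pi_*E$ that is $\pi_*\unit$-linear (linearity because $\pi_*$ is a functor and the module action is natural), and if $\pi_*E$ is indecomposable as a module then the only idempotent $\pi_*\unit$-module endomorphisms are $0$ and $1$; so $\pi_*(e) \in \{0, 1\}$, whence $e$ or $1-e$ is an endomorphism of $E$ inducing $0$ on $\pi_*$, i.e. killed by $\Hom(\Sigma^*\unit, -)$. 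I expect the main (minor) obstacle is simply making the orthogonality/generation argument precise: a nonzero endomorphism $f\colon E \to E$ with $\pi_*(f) = 0$ — equivalently $f$ lies in the kernel of $\Hom(\Sigma^*\unit, \Hom(E,E))$ — must actually vanish, which follows because $\Hom(-, \cone(f))$-type arguments, or more directly the fact that a map inducing zero on $[\Sigma^*\unit, -]$ and whose source is built (as a localizing subcategory) from $\{\Sigma^*\unit\}$ must be zero. Since $E$ itself is built from $\unit$, we can test maps out of $E$ on $\unit$, and conclude.
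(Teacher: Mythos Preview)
Your first two paragraphs are correct and are exactly the paper's argument: $\pi_*$ is additive, so a splitting of~$E$ gives a splitting of~$\pi_*E$ as a $\pi_*\unit$-module; indecomposability forces one summand to have $\pi_*E_2=0$; and since $\unit$ is a compact generator, $\pi_*$ is conservative, so $E_2=0$. That is the entire proof.

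Your third paragraph, however, introduces a genuine error. You claim that a nonzero endomorphism $f\colon E\to E$ with $\pi_*(f)=0$ must vanish, because ``a map inducing zero on $[\Sigma^*\unit,-]$ and whose source is built from $\{\Sigma^*\unit\}$ must be zero''. This is false: $\pi_*$ is conservative (it detects zero \emph{objects}) but in general not faithful (it does not detect zero \emph{maps}). For instance, in $\Der(\bbZ)$ the Bockstein $\bbZ/p\to\Sigma\bbZ/p$ is nonzero, yet $\pi_*(\bbZ/p)$ is concentrated in degree~$0$ and $\pi_*(\Sigma\bbZ/p)$ in degree~$1$, so the induced map on $\pi_*$ is zero. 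The idempotent route can be salvaged, but only by splitting the idempotent~$e$ (using that idempotents split in a big tt-category) to get $E\cong E_1\oplus E_2$ with $\pi_*E_1=0$, and then invoking conservativity---which is precisely your first argument again. So drop the third paragraph entirely; the proof was already complete at the end of the second.
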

\begin{proof}
We have $\pi_*(E'\oplus E'')\cong \pi_*E' \oplus \pi_*E''$ as $\pi_*\unit$-modules. So the result follows immediately from $\pi_*$ being a conservative functor when $\cT$ is generated by~$\unit$.
\end{proof}

\begin{Cor}
\label{cor:EB-ring}%
Let $R$ be a ring with derived category $\cT=\Der(R)$ and let $\cB$ be a homological prime corresponding to the prime $\gp$ of $\Spec(R)\cong \Spc(\Dperf(R))$. Then the pure-injective object $\EB$ is isomorphic to the residue field~$\kappa(\gp)[0]$.
\end{Cor}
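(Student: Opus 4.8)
The plan is to combine \Cref{thm:EB<U(1)} with \Cref{lem:indecomposability}, so the main task is to exhibit a tt-field $\cF$ together with a monoidal exact functor $F\colon \Der(R)\to\cF$ having right adjoint $U$ with $U(\unit)\cong\kappa(\gp)[0]$, and then to verify that $\kappa(\gp)[0]$ is indecomposable. First I would take $\cF=\Der(\kappa(\gp))$, the derived category of the residue field at $\gp$; since $\kappa(\gp)$ is a field, every object of $\cF$ splits as a coproduct of shifts of $\kappa(\gp)$ and tensoring over $\kappa(\gp)$ is (faithfully) exact, so $\cF$ is a tt-field in the sense of \Cref{Rec:BKS}. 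The functor $F$ is the composite of localization $\Der(R)\to\Der(R_\gp)$ with derived base change $-\otimes^{\mathbf L}_{R_\gp}\kappa(\gp)$; this is exact, symmetric monoidal (in particular monoidal), and preserves compact objects, so by Brown representability it admits a right adjoint $U$, which is the forgetful/restriction functor along $R\to R_\gp\to\kappa(\gp)$. Hence $U(\unit)=U(\kappa(\gp)[0])=\kappa(\gp)[0]$ viewed in $\Der(R)$.

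Next I would identify the homological prime attached to $F$ via \Cref{lem:main-diagram} with the one corresponding to $\gp$ in the statement. By \cite[Theorem~5.21]{BalmerKrauseStevenson19}-style reasoning, the homological primes of $\Der(R)$ lying over the Balmer spectrum point $\gp$ are governed by the support of the corresponding pure-injective; since the functor $F$ has kernel on compacts exactly the perfect complexes supported away from $\gp$, we get $\cB=\Ker(\hat F)\cap\cA\fp$ equal to the homological prime assigned to $\gp$. (This matches the discrete valuation ring computation of \cite[Example~6.23]{BalmerKrauseStevenson19}, of which the present corollary is the general version.) Once this identification is in place, \Cref{thm:EB<U(1)} yields that $\EB$ is a direct summand of $U(\unit)=\kappa(\gp)[0]$.

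It remains to see that $\kappa(\gp)[0]$ is indecomposable in $\Der(R)$, and for this I would first pass to $\cT=\Der(R_\gp)$, which is generated by its unit $R_\gp[0]$, so \Cref{lem:indecomposability} applies: $\pi_*\big(\kappa(\gp)[0]\big)=\Hom_{\Der(R_\gp)}(\Sigma^*R_\gp,\kappa(\gp)[0])$ is concentrated in a single degree and equals $\kappa(\gp)$, which is indecomposable as a module over $\pi_*(R_\gp[0])=R_\gp$. Hence $\kappa(\gp)[0]$ is indecomposable in $\Der(R_\gp)$, and since the localization $\Der(R)\to\Der(R_\gp)$ identifies $\kappa(\gp)[0]$ with the corresponding object and reflects (non-trivial) direct sum decompositions of objects already local at $\gp$, $\kappa(\gp)[0]$ is indecomposable in $\Der(R)$ as well. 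Combining, the only possibility left by \Cref{thm:EB<U(1)} is $\EB\cong\kappa(\gp)[0]$.

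The step I expect to be the main obstacle is the bookkeeping around which homological prime the functor $F$ produces: one must be careful that \Cref{lem:main-diagram} attaches to $F$ precisely the homological prime that the statement of the corollary calls "the one corresponding to $\gp$", rather than some neighboring point, and that the non-vanishing of $\EB$ (so that it really is a summand and not zero) follows from $F$ being nonzero on the relevant residue object. This is where one uses that $\bar F$ is faithful and that $\unit\to\bar U(\unit)$ is a monomorphism in $\AB$, exactly as in the proof of \Cref{thm:EB<U(1)}; the remaining verifications (that $\Der(\kappa(\gp))$ is a tt-field, that $F$ is monoidal with the stated adjoint, that $U(\unit)=\kappa(\gp)[0]$) are routine.
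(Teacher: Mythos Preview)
Your proposal is correct and follows essentially the same route as the paper: apply \Cref{thm:EB<U(1)} to the residue field functor $F\colon\Der(R)\to\Der(\kappa(\gp))$ and then invoke \Cref{lem:indecomposability} to see that $U(\unit)=\kappa(\gp)[0]$ is indecomposable. The only difference is that your detour through $\Der(R_\gp)$ is unnecessary: the paper applies \Cref{lem:indecomposability} directly in $\cT=\Der(R)$, which is already generated by its unit $R[0]$, and simply observes that $\pi_*(\kappa(\gp)[0])=\kappa(\gp)$ is indecomposable as an $R$-module (being a field, it has no nontrivial idempotents).
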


\begin{proof}
The homological prime $\cB$ is obtained as in the setting of \Cref{thm:EB<U(1)} from the residue field functor $F\colon\Der(R) \to \Der(\kappa(\gp))$.  By the above discussion, we just need to show that $U(\unit)=\kappa(\gp)[0]$ does not split in $\Der(R)$. By \Cref{lem:indecomposability} this follows from the fact that $\kappa(\gp)$ does not split as an $R$-module.
\end{proof}

Morava $K$-theories furnish all of the tt-primes in the stable homotopy category $\SH$ by~\cite{HopkinsSmith98}. Indeed, each tt-prime of~$\SH^c$ is the kernel $\Ker(\SH^c\to K(p,n)_*\text{-}\GrMod)$ of a functor given by $X \mapsto K(p,n)_*X$ where $p$ is a prime and $n\ge 0$ a natural number or~$\infty$. We write $K(p,\infty)$ for~$\rmH\bbZ/p$ and~$K(p,0)$ for~$\rmH \mathbb{Q}$. When $0<n<\infty$, we have $K(p,n)_*=\mathbb{F}_{\!p}[v_n^{\pm1}]$ with $v_n$ in degree~$2(p^n-1)$. We denote by $K(p,n)_*\text{-}\GrMod$ the category of graded $K(p,n)_*$-modules; this is a semisimple triangulated category.

\begin{Rem}
Each spectrum $K(p,n)$ is a  ring spectrum but they are not $E_{2}$ except for $n=0$ and $n=\infty$; see \cite[Corollary 5.4]{AntolinBarthel19} for a proof of this folklore result. The $K(2,n)$ are not even commutative rings in the homotopy category except for $n=0$ and $n=\infty$ \cite{Wurgler86}. Nevertheless the homotopy category of $K(p,n)$-modules is equivalent via taking homotopy groups to the symmetric monoidal category $K(p,n)_*\text{-}\GrMod$.
By the K{\"u}nneth isomorphism the functor $\SH \to K(p,n)_*\text{-}\GrMod$ is monoidal, although not necessarily \emph{symmetric} monoidal for $p=2$ and~$0<n<\infty$.
%%%%%: the functor $\SH \to K(p,n)_*\text{-}\GrMod$ is symmetric monoidal if and only if $n=0$ or $\infty$ or $p >2$.
\end{Rem}

Because $K(p,n)_*$ is a graded field, the category $K(p,n)_*\text{-}\GrMod$ is a tensor-triangulated field. So for each Morava $K$-theory we have a homological residue field $\AB$ and a pure-injective $\EB$ that we want to identify.

\begin{Lem}\label{lem:Knindecomposable}
Each Morava $K$-theory spectrum $K(p,n)$ is indecomposable in the stable homotopy category.
\end{Lem}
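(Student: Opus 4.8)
The plan is to apply \Cref{lem:indecomposability} to $\cT = $ the homotopy category of $K(p,n)$-modules, with $E$ being $K(p,n)$ itself viewed as its own unit object, and thereby transfer indecomposability down to $\SH$. First I would recall that, as noted in the preceding remark, the homotopy category $\cK$ of $K(p,n)$-module spectra is a tensor-triangulated category generated by its $\otimes$-unit $K(p,n)$, and that $\pi_*$ gives an equivalence $\cK \simeq K(p,n)_*\text{-}\GrMod$. In particular $\pi_*\unit = \pi_* K(p,n) = K(p,n)_*$ is a graded field, so the only graded $K(p,n)_*$-module that is indecomposable \emph{and} nonzero is a shift of $K(p,n)_*$ itself; here $\pi_* K(p,n) = K(p,n)_*$ is free of rank one, hence indecomposable as a module over itself. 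By \Cref{lem:indecomposability} applied inside $\cK$, the object $K(p,n)$ is indecomposable in $\cK$.

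Next I would descend this along the forgetful functor. A splitting $K(p,n) \cong A \oplus B$ in $\SH$ (as spectra, i.e.\ in the unit component — or more carefully, any idempotent $e \in [K(p,n), K(p,n)]_{\SH} = \pi_0 \operatorname{End}_{\SH}(K(p,n))$) would in particular give an idempotent self-map of $K(p,n)$ in $\SH$, hence an idempotent in $\pi_0 K(p,n)$ after using the unit map and the ring structure — but here a cleaner route is to observe that an idempotent splitting of $K(p,n)$ in $\SH$ induces, after applying $K(p,n) \wedge -$ and using that $K(p,n)$ is a ring spectrum, an idempotent splitting of $K(p,n)$ as a $K(p,n)$-module, which contradicts indecomposability in $\cK$. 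Alternatively, and perhaps most transparently: the graded ring $\pi_* K(p,n)$ has no nontrivial idempotents (it is $\mathbb{F}_p[v_n^{\pm 1}]$, a domain, for $0 < n < \infty$, and $\mathbb{Q}$ or $\mathbb{F}_p$ for $n = 0, \infty$), so since $\SH$ is generated by $\unit$ and $\pi_*(-)$ is conservative, \Cref{lem:indecomposability} applies directly with $\cT = \SH$ and $E = K(p,n)$: $\pi_* K(p,n)$ is indecomposable as a module over $\pi_* \unit = \pi_*^s$ precisely because it is an indecomposable \emph{ring} (no nontrivial idempotents) and any $\pi_*^s$-module decomposition would refine to a ring-theoretic idempotent.

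The main subtlety I expect is pinning down exactly what "indecomposable in the stable homotopy category" should mean for a ring spectrum and making sure \Cref{lem:indecomposability} is being invoked with the right module structure — the lemma is stated for $\pi_* E$ as a $\pi_* \unit$-module, and one must check that the relevant $\pi_*^s$-module structure on $\pi_* K(p,n)$ detects decompositions of $K(p,n)$ as a spectrum. Since $\SH$ is generated by the sphere, $\pi_*(-) = [\Sigma^* \unit, -]$ is conservative and sends coproducts/summands to summands of $\pi_*^s$-modules, so this is immediate; the only real content is the elementary ring-theoretic fact that $\mathbb{F}_p[v_n^{\pm 1}]$, $\mathbb{Q}$, and $\mathbb{F}_p$ have no idempotents other than $0$ and $1$, whence $\pi_* K(p,n)$ cannot be written as a nontrivial direct sum of $\pi_*^s$-submodules (such a splitting would be a splitting of $\pi_* K(p,n)$ as a module over itself, contradicting that it is a free rank-one, hence indecomposable, module). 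This gives the claim.
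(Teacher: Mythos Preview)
Your proposal has a genuine gap in the case $0<n<\infty$. The crucial step is the claim that $\pi_*K(p,n)$ is indecomposable as a $\pi_*^s$-module, so that \Cref{lem:indecomposability} can be applied with $\cT=\SH$. But this is false. By Nishida's nilpotence theorem every positive-degree element of $\pi_*^s$ is nilpotent, and $K(p,n)_*=\mathbb{F}_{\!p}[v_n^{\pm1}]$ is a graded field with no nonzero nilpotents; hence the ring map $\pi_*^s\to K(p,n)_*$ has image exactly $\mathbb{F}_{\!p}$ in degree~$0$. Since for any ring spectrum $E$ the $\pi_*^s$-module structure on $\pi_*E$ factors through this ring map, the $\pi_*^s$-module $K(p,n)_*$ is nothing more than the graded $\mathbb{F}_{\!p}$-vector space $\bigoplus_{k\in\bbZ}\mathbb{F}_{\!p}\cdot v_n^{k}$, which is patently decomposable. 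Your sentence ``any $\pi_*^s$-module decomposition would refine to a ring-theoretic idempotent'' is where the argument breaks: a $\pi_*^s$-linear splitting need not be $K(p,n)_*$-linear, precisely because $v_n$ is not in the image of~$\pi_*^s$.

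The alternative you sketch, applying $K(p,n)\wedge-$ to a putative splitting $K(p,n)\simeq A\oplus B$, does not repair this: it only produces a splitting of $K(p,n)\wedge K(p,n)$ as a $K(p,n)$-module, and that object is a large wedge of shifts of $K(p,n)$, so its decomposability is no contradiction. The paper instead invokes the Hopkins--Smith result that any retract in $\SH$ of a $K(p,n)$-module is itself a wedge of shifts of $K(p,n)$; combined with the observation that $K(p,n)_*$ is at most one-dimensional over $\mathbb{F}_{\!p}$ in each degree, this forces one of $A,B$ to be zero. That extra structural input (or something like it) is genuinely needed here; your argument as written only handles the cases $n=0$ and $n=\infty$, where $\pi_*K(p,n)$ is concentrated in a single degree.
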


\begin{proof}
This follows from~\cite[Proposition 1.10]{HopkinsSmith98}, which states that any retract in $\SH$ of a $K(p,n)$-module is a wedge of shifts of~$K(p,n)$. Because $K(p,n)_*$ is either $\mathbb{F}_p$ or $0$ in each dimension, it follows that $K(p,n)$ itself is indecomposable in~$\SH$.
\end{proof}

\begin{Cor}
\label{cor:EB-Morava}%
For $\cB$ the homological prime of~$\Spch(\SH^c)$ corresponding to~$K(p,n)$, we have an isomorphism~$\EB \simeq K(p,n)$ in~$\SH$.
\end{Cor}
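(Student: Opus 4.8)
The plan is to apply \Cref{thm:EB<U(1)} to the Morava $K$-theory functor and then rule out the possibility of a non-trivial direct summand using the indecomposability of $K(p,n)$. First I would recall the setup: the homological prime $\cB$ arises (via \Cref{lem:main-diagram}) from the monoidal exact functor $F\colon\SH\to K(p,n)_*\text{-}\GrMod$ given by $X\mapsto K(p,n)_*X$, which is monoidal by the K\"unneth isomorphism. This functor admits a right adjoint $U$ by Brown representability. The point is then to identify $U(\unit)$: the unit of $K(p,n)_*\text{-}\GrMod$ is $K(p,n)_*$ itself (concentrated appropriately), and the right adjoint to "smash with $K(p,n)$ and take homotopy" should send this unit to the spectrum $K(p,n)$. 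So I would argue $U(\unit)\simeq K(p,n)$, using that the homotopy category of $K(p,n)$-modules is equivalent, via homotopy groups, to $K(p,n)_*\text{-}\GrMod$ (as recalled in the Remark), and that the functor $\SH\to (K(p,n)\text{-Mod})$ given by $-\wedge K(p,n)$ is left adjoint to the forgetful functor, whose value on the unit $K(p,n)$ of the module category is $K(p,n)$ viewed in $\SH$.

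Once $U(\unit)\simeq K(p,n)$ is established, \Cref{thm:EB<U(1)} gives that $\EB$ is a direct summand of $K(p,n)$ in $\SH$. By \Cref{lem:Knindecomposable}, $K(p,n)$ is indecomposable, so either $\EB\simeq K(p,n)$ or $\EB=0$. The latter is impossible because $\EB$ has a non-zero object $\bEB$ (the injective hull of the unit in $\AB$) associated to it — indeed $\bunit\hook\bEB$ is a monomorphism with non-zero source, so $\bEB\neq 0$, hence $\EB\neq 0$ since $\overline{(-)}=Q\circ\yoneda$ would send a zero object to zero. Therefore $\EB\simeq K(p,n)$.

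The main obstacle I expect is the identification $U(\unit)\simeq K(p,n)$. The subtlety is that $K(p,n)$ is not an $E_\infty$- (nor even $E_2$-) ring spectrum for $0<n<\infty$, so one cannot naively invoke the theory of modules over a commutative ring spectrum; the target category $K(p,n)_*\text{-}\GrMod$ is being used as a convenient model for the homotopy category of $K(p,n)$-modules, and the adjunction $F\dashv U$ has to be matched up with the free/forgetful adjunction between $\SH$ and $K(p,n)$-modules under this equivalence. I would handle this by noting that the forgetful functor $K(p,n)\text{-Mod}\to\SH$ has left adjoint $-\wedge K(p,n)$, that this forgetful functor corresponds under the equivalence with $K(p,n)_*\text{-}\GrMod$ to the composite with $\pi_*$, and that the right adjoint of the forgetful functor (function spectrum into $K(p,n)$, i.e. $F_{\SH}(K(p,n),-)$-type construction, or more simply the cofree module functor) applied to the unit recovers $K(p,n)$ up to the usual self-duality. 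Alternatively, one can sidestep this and argue more directly: whatever $U(\unit)$ is, \Cref{thm:EB<U(1)} shows $\EB\mid U(\unit)$, and separately the restricted-Yoneda image $\hEB$ is determined by $\cB$; comparing with \cite[Proposition~1.10]{HopkinsSmith98} (every $\SH$-retract of a $K(p,n)$-module is a wedge of shifts of $K(p,n)$) together with the fact that $U(\unit)$ is a $K(p,n)$-module forces $U(\unit)$, hence its summand $\EB$, to be a wedge of shifts of $K(p,n)$; indecomposability and non-triviality of $\EB$ then pin it down to a single shift of $K(p,n)$, and a degree/grading check (using $K(p,n)_*$ is $\mathbb{F}_p$ or $0$ in each degree) identifies the shift as the trivial one.
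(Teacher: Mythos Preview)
Your proposal is correct and follows essentially the same route as the paper: apply \Cref{thm:EB<U(1)} to the Morava $K$-theory functor, identify $U(\unit)$ with $K(p,n)$, and conclude via \Cref{lem:Knindecomposable}. The paper streamlines the step you flag as the main obstacle by taking $F\colon\SH\to\Ho(K(p,n)\MMod)$ directly (rather than $K(p,n)_*\text{-}\GrMod$), so that $U$ is simply the forgetful functor and $U(\unit)=K(p,n)$ is immediate; your extra care in ruling out $\EB=0$ and your alternative argument via \cite[Proposition~1.10]{HopkinsSmith98} are sound but unnecessary.
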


\begin{proof}
Because $\EB$ is constructed as in \Cref{lem:main-diagram} from the monoidal functor $F\colon \SH \to \Ho(K(p,n)\MMod)$,  by \Cref{thm:EB<U(1)} we have that $\EB$ is a summand of~$K(p,n)$. Therefore by \Cref{lem:Knindecomposable} we have that $\EB$ is isomorphic to~$K(p,n)$.
\end{proof}

\begin{Rem}
The tt-primes of the $G$-equivariant stable homotopy category~$\SH(G)$ have been determined for $G$ finite in~\cite{BalmerSanders17} and for general compact Lie groups~$G$ in~\cite{BarthelGreenleesHausmann20}. They are all pulled-back from the chromatic ones in~$\SH$ via geometric fixed-point functors $\Phi^H\colon\SH(G)\to \SH$ for (closed) subgroups~$H\le G$. The functor~$\Phi^H$ admits a right adjoint that we denote $U^H\colon \SH\to \SH(G)$. It follows that the pure-injective $\EB$ corresponding to~$H$ and $K(p,n)$ is a summand of~$U^H(K(p,n))$. When $H=G$ it is easy to show that these objects remain indecomposable in~$\SH(G)$ but we do not know whether this holds in general.
\end{Rem}

\begin{Rem}
We have shown that in two of the main examples, the derived category of a ring and the stable homotopy category, the $\EB$ are ring objects. It is natural to wonder if this is always the case. By~\cite{BalmerKrauseStevenson19}, there is always a map $\EB\otimes \EB\to \EB$ retracting $\EB\otimes\eta$ where $\eta\colon \unit\to\EB$ comes from the definition of~$\bEB$ as the injective hull of~$\bunit$. Those `weak rings' $\EB$ would be actual rings if we always had $\EB=U(\unit)$ but the latter is not necessarily true. Indeed, one can `overshoot' the mark as explained in the following example.
\end{Rem}

\begin{Exa}
\label{Exa:over-shoot}%
For a field extension $K \to L$, we have a tt-functor on derived categories $\cT:=\Der(K) \to \Der(L)=:\cF$. Note that $\cT$ is already a tt-field itself. The pure-injective $\EB$ associated to the only homological prime $\cB=0$ of~$\cT$ is the $\otimes$-unit $\unit_{\cT}=K[0]$ itself, whereas $U(\unit_{\cF})$ is~$L[0]$ and of course $L\simeq K^{\dim_K(L)}$.
\end{Exa}

Our final example is that of $\pi$-points in modular representation theory. Let $k$ be a field of characteristic $p$ dividing the order of a finite group~$G$. Recall~\cite{FriedlanderPevtsova07} that a $\pi$-point of the group ring $kG$ is a flat algebra homomorphism $\pi\colon KC_p \to KG$ which factors as $KC_p \to KA \to KG$ for $A$ an elementary abelian $p$-subgroup of $G$ and $KA \to KG$ the map induced by the inclusion~$A\le G$, and where $K$ is a field extension of $k$. There is a corresponding functor $\pi^*\colon\Stab{kG} \to \Stab{KC_p}$ composed of extension-of-scalars to~$K$ followed by restriction along$~\pi$. This functor acts like a residue field functor, but frustratingly this functor is not monoidal, and so \Cref{lem:main-diagram} cannot be used to construct a homological residue field.

Two $\pi$-points $\gamma: KC_p \to KG$, $\zeta:LC_p \to LG$ for $kG$ are equivalent if for every finite dimensional $kG$-module $M$, $\gamma^*( M)$ is trivial in $\Stab{KC_p}$ if and only if $\zeta^*(M)$ is trivial in $\Stab{L C_p}$.  By Lemma 2.2 of \cite{FriedlanderPevtsova05}, see also example 1.6 of \cite{BensonIyengarKrausePevtsova18}, every $\pi$-point $KC_p\cong K[t]/t^p \to K[x_1,\dots,x_n]/(x_i^p)\cong KA$ is equivalent to one of the form $t \mapsto \sum_{i=1}^n \alpha_i x_i$ for ~$\alpha=(\alpha_1,\ldots,\alpha_n)\in K^n\sminus\{0\}$, and $\alpha, \beta \in K^n\sminus \{0\}$  give equivalent $\pi$-points if and only if they give the same point of $\bbP^{n-1}_K$.

We will denote such a map by $\pi_{\alpha}$. Up to equivalence, one indexes the $\pi$-points $\pi_{\alpha}$ by the closed points of $\bbP^{n-1}_{K}\simeq\Spc (\Stab(KA)^c)$. Allowing all field extensions $K/k$, the $\pi_{\alpha}$ parameterize all points of $\Spc(\Stab(kA)^c)$.

The map $\pi_{\alpha}$ is a Hopf algebra map when the source and target have the Hopf algebra structure coming from thinking of $KC_p$ and $KA$ as the restricted  enveloping algebra of a $p$-restricted Lie algebra. Consequently the functor $\pi_{\alpha}^*\colon \Stab(KA) \to \Stab(KC_p)$ is a monoidal functor with respect to this different tensor.
In formulas, this corresponds to the comultiplication $\Delta$ on $KA=K[x_i,\dots,x_n]/(x_i^p)$ given by $\Delta(x_i)=x_i \otimes 1+ 1 \otimes x_i$. See \cite{CarlsonIyengar17} for a discussion of these structures.

\begin{Conv}
\label{conv:Lie}%
From now on we use the  Hopf algebra structure on $KA$ coming from the $p$-restricted Lie algebra structure to put a tensor structure on~$\Stab(KA)$.
\end{Conv}

The $\pi_{\alpha}^*\colon\Stab(kA)\to\Stab(KC_p)$ are now tensor-triangular residue field functors. So, there are corresponding homological residue fields $\Stab(kA) \to \AB$ and corresponding pure-injectives $\EB$ in~$\Stab(kA)$. We would like to relate the $\pi$-point to the pure-injective object. First, we note what $\pi$-points do on cohomology.

\begin{Lem}\label{lem:pipoints-cohomology}
Let $\pi_{\alpha}\colon kC_p\to kA$ be a $k$-rational $\pi$-point. For $p$ odd, the induced map on cohomology $\pi_{\alpha}^*\colon \rmH^*(A,k)\cong k[\eta_1,\dots,\eta_n] \otimes \Lambda(\xi_1,\dots,\xi_n) \to k[\zeta] \otimes \Lambda(\omega)\cong \rmH^*(C_p,k)$ is given by $\pi_{\alpha}^*(\eta_i)=\alpha_i^p \zeta$ and $\pi_{\alpha}^*(\xi_i)=\alpha_i\omega$ for all~$i=1,\ldots,n$. For $p=2$, there is no exterior power and $\pi_{\alpha}^*(\eta_i)=\alpha_i \zeta$ for all~$i=1,\ldots,n$.
\end{Lem}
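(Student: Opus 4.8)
The plan is to reduce the statement to a direct computation with the explicit formulas for $\pi$-points, using functoriality of group cohomology and the known ring structure on $\rmH^*(A,k)$ and $\rmH^*(C_p,k)$. First I would recall the standard presentations: for $p$ odd, $\rmH^*(C_p,k)\cong k[\zeta]\otimes\Lambda(\omega)$ with $\deg\zeta=2$, $\deg\omega=1$, and $\beta\omega=\zeta$ for the Bockstein $\beta$; and $\rmH^*(A,k)\cong k[\eta_1,\dots,\eta_n]\otimes\Lambda(\xi_1,\dots,\xi_n)$, obtained by the Künneth formula from the $n$ copies of $\rmH^*(C_p,k)$ coming from the factors of $A\cong (C_p)^{\times n}$, with $\deg\eta_i=2$, $\deg\xi_i=1$. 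For $p=2$, $\rmH^*(C_2,k)\cong k[\zeta]$ with $\deg\zeta=1$ and $\rmH^*(A,k)\cong k[\eta_1,\dots,\eta_n]$ with $\deg\eta_i=1$. The classes $\eta_i,\xi_i$ are pulled back from the $i$-th projection $A\to C_p$, or equivalently are detected on the $i$-th inclusion $C_p\hookrightarrow A$.

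The key step is to understand $\pi_\alpha^*$ on the degree-$1$ part $\rmH^1$. Since $\pi_\alpha\colon kC_p\to kA$ sends the generator $t$ of $kC_p$ to $\sum_i\alpha_i x_i$ (using the identification $t\leftrightarrow$ a generator of the augmentation ideal modulo its square, and $x_i\leftrightarrow$ the corresponding generator for the $i$-th factor), the induced map on the tangent space — equivalently on $\rmH^1$, which by the bar resolution or by $\rmH^1(-,k)\cong\Hom(-,k)$ of the abelianization is dual to the augmentation ideal modulo its square — is the linear map determined by $\alpha$. Concretely $\pi_\alpha^*(\xi_i)=\alpha_i\omega$ for $p$ odd (and $\pi_\alpha^*(\eta_i)=\alpha_i\zeta$ for $p=2$, since there $\eta_i$ lives in degree $1$). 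This is really a restricted-Lie-algebra computation: $\pi_\alpha$ is the map of $p$-restricted Lie algebras sending a basis vector to $\sum_i\alpha_i(\text{basis vector}_i)$, and on $\rmH^1$, which is the $k$-linear dual of the underlying vector space, this transposes to the stated formula. For the even-degree classes $\eta_i$ when $p$ is odd, I would use naturality of the Bockstein: $\eta_i=\beta\xi_i$ (up to a nonzero scalar in our normalization, or exactly, depending on conventions), and likewise $\zeta=\beta\omega$. Then $\pi_\alpha^*(\eta_i)=\pi_\alpha^*(\beta\xi_i)=\beta(\pi_\alpha^*\xi_i)=\beta(\alpha_i\omega)=\alpha_i\beta\omega=\alpha_i\zeta$. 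But wait — the claimed formula is $\pi_\alpha^*(\eta_i)=\alpha_i^p\zeta$, not $\alpha_i\zeta$. The discrepancy is exactly the point: the Bockstein is additive but not $k$-linear over $\mathbb{F}_p$-scalars only; over a general field $K$, Frobenius enters. More precisely, the relation on cohomology coming from the restricted structure involves the $p$-th power map (the restriction map $\xi\mapsto\xi^{[p]}$ on the Lie algebra side dualizes to a Frobenius-semilinear phenomenon), so $\beta$ on a class scaled by $\alpha_i\in K$ picks up $\alpha_i^p$. I would make this precise by recalling that for $C_p$ the generator $\eta$ of $\rmH^2$ is characterized via the restricted structure / the extension class, and that the map $KC_p\to KA$ scaling by $\alpha_i$ induces multiplication by $\alpha_i^p$ on $\rmH^2$ because $\rmH^2(C_p,k)$ classifies the $p$-restricted structure and the restriction map $x\mapsto x^{[p]}$ is what's being transported. (For $p=2$ there is no separate even generator and no exterior algebra, so only the linear formula $\pi_\alpha^*(\eta_i)=\alpha_i\zeta$ appears, consistent with the statement.)

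The main obstacle I anticipate is pinning down this Frobenius twist cleanly — i.e., justifying the exponent $p$ on $\alpha_i$ in the even-degree formula without getting lost in sign/normalization conventions for the generators of $\rmH^*(C_p,k)$ and the precise form of the Hopf-algebra (restricted Lie algebra) structure. I would handle this by citing \cite{CarlsonIyengar17} for the description of the relevant structures, and by reducing to the case $n=1$ (a single $C_p$ with $\pi\colon kC_p\to kC_p$, $t\mapsto\alpha t$) via naturality with respect to the inclusions $C_p\hookrightarrow A$ and the Künneth decomposition; the general $\alpha$ then follows from the ring structure since $\pi_\alpha$ is built from the diagonal and the scalings $\alpha_i$. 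Once the $n=1$ scaling computation is done — $\alpha$ acts as $\alpha$ on $\rmH^1$ and as $\alpha^p$ on $\rmH^2$, the latter because the Bockstein of $\alpha\omega$ is $\alpha^p\beta\omega$ owing to the $p$-semilinearity forced by the restricted structure — the multiplicative extension to all of $\rmH^*(A,k)$ is automatic and the lemma follows.
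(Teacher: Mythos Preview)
The paper's own proof is a one-line citation to Carlson~\cite[Propositions~2.20 and~2.22]{Carlson83}, so your hands-on computation is a genuinely different route. Your treatment of $\rmH^1$ is fine: dualizing the map $t\mapsto\sum_i\alpha_ix_i$ on the augmentation ideal modulo its square gives $\pi_\alpha^*(\xi_i)=\alpha_i\omega$ (and the $p=2$ statement) exactly as you say.

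The gap is in the degree-$2$ argument. Your first instinct, Bockstein naturality, gives $\pi_\alpha^*(\eta_i)=\alpha_i\zeta$, and you correctly flag that this contradicts the claim. But your fix is not right: the Bockstein on $\rmH^*(-,k)$ \emph{is} $k$-linear (it is the $\mathbb F_p$-Bockstein base-changed along $\mathbb F_p\hookrightarrow k$), so $\beta(\alpha\omega)=\alpha\,\beta(\omega)=\alpha\zeta$, not $\alpha^p\zeta$. The actual obstruction is different: $\pi_\alpha$ is a map of $k$-algebras (or of restricted Lie algebras), not a map induced by a group homomorphism $C_p\to A$, and the Bockstein is only natural for maps coming from group homomorphisms. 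So there is no reason for $\pi_\alpha^*$ to commute with~$\beta$, and indeed it does not when $\alpha_i\notin\mathbb F_p$. Your sentence ``the Bockstein of $\alpha\omega$ is $\alpha^p\beta\omega$ owing to the $p$-semilinearity'' is therefore the wrong diagnosis.

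What actually produces the $\alpha_i^p$ is visible in the minimal resolution: over $k[x]/(x^p)$ the differentials alternate between $x$ and $x^{p-1}$, and lifting the identity on $k$ along $\pi_\alpha$ to a chain map forces the degree-$1$ component to be $\alpha$ and the degree-$2$ component to be $\alpha\cdot\alpha^{p-1}=\alpha^p$. Equivalently, for an abelian restricted Lie algebra with trivial $p$-operation, the degree-$2$ generators sit in the Frobenius twist $(\mathfrak g^*)^{(1)}$, so a linear map $\phi$ of Lie algebras induces $\phi^{(1)}$ (entries raised to the $p$-th power) on~$\rmH^2$; this is the precise statement behind your Frobenius intuition, and it is what Carlson's computation establishes. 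If you replace the Bockstein paragraph with either the explicit chain-map lift in the $n=1$ case or the Frobenius-twist functoriality of $\rmH^2$ for restricted Lie algebras, your reduction to $n=1$ plus K\"unneth then goes through.
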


\begin{proof}
For the polynomial classes this is~\cite[Propositions~2.20 and~2.22]{Carlson83} and the result for the exterior classes follows from the proof of the first proposition.
\end{proof}

\begin{Cor}\label{cor:EB-pipoints}
Let $A$ be an elementary abelian $p$-group. See \Cref{conv:Lie}.
\begin{enumerate}[\rm(a)]
\item
\label{it:k}%
The pure-injective $\EB$ of $\Stab(kA)$ associated to a (closed) $k$-rational $\pi$-point $\pi_{\alpha}\colon kC_p \to kA$ is given by $\coind_{kC_p}^{kA}k=\Hom_{kC_p}(kA,k)$.
\item
\label{it:K}%
Let $K/k$ be an extension. The pure-injective $\EB$ of $\Stab(kA)$ associated to a $\pi$-point $\pi_{\alpha}\colon KC_p \to KA$ is a summand of $\res^{KA}_{kA}\coind_{KC_p}^{KA}K=\Hom_{KC_p}(KA,K)$.
\end{enumerate}
\end{Cor}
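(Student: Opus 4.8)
The plan is to apply Theorem~\ref{thm:EB<U(1)} to the monoidal residue-field functor $F=\pi_\alpha^*\colon\Stab(kA)\to\Stab(KC_p)$ (using the $p$-restricted Lie algebra tensor structure of Convention~\ref{conv:Lie}), identify its right adjoint $U$ on the $\otimes$-unit, and then — in part~\eqref{it:k} — prove an indecomposability statement that upgrades ``$\EB$ is a summand of $U(\unit)$'' to ``$\EB=U(\unit)$''. First I would make $U$ explicit. The functor $\pi_\alpha^*$ is extension of scalars to $K$ followed by restriction along the flat (in fact Hopf-algebra) map $\pi_\alpha\colon KC_p\to KA$; its right adjoint is coinduction $\coind_{KC_p}^{KA}=\Hom_{KC_p}(KA,-)$ followed by restriction of scalars $\res^{KA}_{kA}$ (the right adjoint to extension of scalars). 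Hence $U(\unit)=\res^{KA}_{kA}\coind_{KC_p}^{KA}K=\Hom_{KC_p}(KA,K)$ on the level of $kA$-modules, which by Theorem~\ref{thm:EB<U(1)} proves part~\eqref{it:K} immediately: $\EB$ is a direct summand of this object in $\Stab(kA)$. (One should note in passing that $KA$ is free, hence projective, over $KC_p$ via the flat map $\pi_\alpha$, so the coinduction is exact and this object is genuinely the value of the stable right adjoint.)

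For part~\eqref{it:k}, where $K=k$, I would show that $M:=\coind_{kC_p}^{kA}k=\Hom_{kC_p}(kA,k)$ is indecomposable in $\Stab(kA)$, so that the summand $\EB$ must equal $M$ (it cannot be zero, since $\unit\to\bar U(\unit)$ is a monomorphism in $\AB$, so $\bEB\neq0$). Unlike the topological examples, $\Stab(kA)$ is not generated by its unit, so Lemma~\ref{lem:indecomposability} does not apply directly; instead I would argue representation-theoretically. Over the truncated polynomial algebra $kA=k[x_1,\dots,x_n]/(x_i^p)$, the module $\Hom_{kC_p}(kA,k)$ has $k$-dimension $p^{n-1}$, and one can identify it (via the Hopf/Lie structure, using that $kA$ is free of rank $p^{n-1}$ over $k[t]/t^p$ embedded by $t\mapsto\sum\alpha_i x_i$) with the $kA$-module whose underlying space carries the action dual to a rank-one free $kC_p$-module; concretely, after a linear change of the $x_i$ adapted to $\alpha$, it is the ``periodic'' / ``rank-variety'' module supported at the point $[\alpha]\in\bbP^{n-1}_k=\Spc(\Stab(kA)^c)$. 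The key input is that its stable endomorphism ring is local: I would compute $\underline{\End}_{kA}(M)$, using that coinduction is right adjoint to restriction so $\underline{\Hom}_{kA}(M,M)\cong\underline{\Hom}_{kC_p}(\pi_\alpha^*M,k)$, and show this ring has no nontrivial idempotents — equivalently, that the corresponding variety-theoretic support is a single closed point, which forces indecomposability by the theory of varieties for modules (the support of a direct sum is the union of the supports, and here the support is the irreducible point $[\alpha]$, while the module has no projective summand since $\pi_\alpha^*M=k\neq0$ is nonprojective over $kC_p$).

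I expect the main obstacle to be this last indecomposability argument in part~\eqref{it:k}: making precise the identification of $\coind_{kC_p}^{kA}k$ as a module supported at the single point $[\alpha]$ with trivial-only idempotents in its stable endomorphism ring, and ruling out a projective summand. The cohomological computation of Lemma~\ref{lem:pipoints-cohomology} is exactly what pins down the support: $\pi_\alpha^*$ sends the polynomial generators to $\alpha_i^p\zeta$ (or $\alpha_i\zeta$ for $p=2$), so the image of $\pi_\alpha$ in $\Spec\rmH^*(A,k)$, hence the $\pi$-point support of $M$, is the line through $\alpha$, i.e.\ the closed point $[\alpha]\in\bbP^{n-1}_k$; an indecomposable with one-point connected support and no projective summand is indecomposable in $\Stab(kA)$. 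Once that is in hand, Theorem~\ref{thm:EB<U(1)} gives $\EB=M=\coind_{kC_p}^{kA}k=\Hom_{kC_p}(kA,k)$, completing~\eqref{it:k}, and~\eqref{it:K} follows from the adjoint computation together with the same theorem. The case $p=2$ requires only the obvious modification (no exterior generators), and the formulas of Lemma~\ref{lem:pipoints-cohomology} cover it.
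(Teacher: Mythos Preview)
Your treatment of part~\eqref{it:K} is correct and matches the paper: identify the right adjoint as coinduction followed by restriction of scalars and invoke \Cref{thm:EB<U(1)}.

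For part~\eqref{it:k}, however, you make a genuine misstep. You assert that $\Stab(kA)$ is \emph{not} generated by its $\otimes$-unit and therefore abandon \Cref{lem:indecomposability}. In fact $A$ is a $p$-group, so $k$ is the unique simple $kA$-module; every finite-dimensional module has a composition series with all subquotients~$k$, and the resulting short exact sequences are triangles in $\Stab(kA)$. Thus $k=\unit$ generates the compacts as a thick subcategory and hence $\Stab(kA)$ as a localizing one. (The paper even records this explicitly in the remark following the corollary.) The paper's proof then applies \Cref{lem:indecomposability} directly: it suffices to show that $\pi_*\big(\coind_{kC_p}^{kA}k\big)=\hat\rmH{}^{-*}(C_p,k)$ is indecomposable as a module over $\pi_*\unit=\hat\rmH{}^{-*}(A,k)$, with the module structure given by the ring map $\pi_\alpha^*$ computed in \Cref{lem:pipoints-cohomology}. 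That is a short, concrete graded-module check.

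Your replacement argument via support varieties does not close the gap. Knowing that the support of $M=\coind_{kC_p}^{kA}k$ is the single closed point $[\alpha]$ only tells you that any summand is supported in $\{[\alpha]\}$; it does not rule out, say, $M\cong N\oplus N$. The sentence ``an indecomposable with one-point connected support \ldots\ is indecomposable'' is circular. Your other idea---computing $\underline{\End}_{kA}(M)$ via adjunction and showing it is local---could in principle succeed, but you have not carried it out: one still needs to identify $\pi_\alpha^*M$ as a $kC_p$-module and analyze $\underline{\Hom}_{kC_p}(\pi_\alpha^*M,k)$, which is no simpler than the paper's Tate-cohomology computation and uses the same input (\Cref{lem:pipoints-cohomology}). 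The cleanest fix is simply to reinstate \Cref{lem:indecomposability}.
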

\begin{proof}
The second claim follows immediately from~\Cref{thm:EB<U(1)}. The first claim will follow from the second (with $K=k$) if we can show that $\coind_{kC_p}^{kA} k$ is indecomposable in~$\Stab(kA)$. From \Cref{lem:indecomposability} it is enough to show that $\pi_*( \coind_{kC_p}^{kA}k)$ is indecomposable as a $\pi_*k=\hat\rmH{}^{-*}(A,k)$ module. But $\pi_*( \coind_{kC_p}^{kA} k)=\hat\rmH{}^{-*}(C_p,k)$ with the module structure induced by the restriction map $\pi^*_{\alpha}\colon \hat\rmH{}^*(A,k) \to \hat\rmH{}^*(C_p,k)$ and this module is indecomposable by~\Cref{lem:pipoints-cohomology}.
\end{proof}

\begin{Rem}
In~\eqref{it:K}, there is no reason for the object~$\res^{KA}_{kA}\coind_{KC_p}^{KA}K$ to be indecomposable, for one can `overshoot' the right residue field as in \Cref{Exa:over-shoot}.
\end{Rem}

\begin{Rem}
Because $\cK=\Stab(kA)^c$ is generated by the unit, all thick subcategories are tensor-ideals, and hence the two different tensor structures on $\cK$ give the same tensor-triangular spectrum. However, this does not imply that the homological spectra are in canonical bijection. In other words it is not necessarily the case that if $I$ is a Serre tensor-ideal of $\modd\cK$ in the restricted Lie algebra tensor structure then it is also a Serre tensor-ideal in the group tensor algebra structure.
Therefore these results do not give classifications of the objects $\EB$ of $\Stab(kA)$ with the group tensor structure, \apriori. Further research might tell.
\end{Rem}

%------------------------------------------------------------------------------

\section{Homological residue fields as comodules}
\label{sec:comodules}%

%------------------------------------------------------------------------------

Recall Diagram~\eqref{eq:main}. It is proved in~\cite{BalmerKrauseStevenson19} that $\bar{U}$ is monadic under the assumption that $U$ is faithful, \ie that $F$ is surjective-up-to-summands (an assumption we do not make here). In that case, one can describe $\Modd \cat F^c$ as the Eilenberg-Moore category of $\bar U(\unit)$-modules in~$\AB$. This was the logic of~\cite{BalmerKrauseStevenson19}: How to recover~$\cat{F}$ from the abstract~$\AB$?

However, in many of the examples, the category~$\cF$ is more familiar than $\AB$ and we would rather like a description of $\AB$ in terms of~$\cF$. Towards this end, we will show that $\bar{F}$ is \emph{co}monadic, so $\AB$ is the Eilenberg-Moore category of comodules for the comonad $\bar{F}\bar{U}$ over $\Modd \cat F^c$.

For this, we use the dual version of the Beck monadicity theorem. Denote the Eilenberg-Moore category of comodules for a comonad $H$ on $\cat{D}$ by $H\CComod_{\cat{D}}$.

\begin{Lem}
\label{Lem:Beck}%
Suppose we have an adjunction $F\colon \cat{C} \adjto \cat{D}:U$ and that $\cat{C}$ and $\cat{D}$ have equalizers and $F$ is conservative and preserves equalizers. Then $F$ is comonadic, \ie the comparison functor $f\colon \cat{C} \to FU\CComod_\cat{D}$ is an equivalence of categories.
\end{Lem}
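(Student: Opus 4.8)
The plan is to deduce this from the classical (Beck) monadicity theorem applied to the opposite categories. Since a comonad on $\cat{D}$ is a monad on $\cat{D}\op$, and comodules for a comonad on $\cat{D}$ are precisely algebras for the corresponding monad on $\cat{D}\op$, the statement ``$F$ is comonadic'' is equivalent to ``$F\op\colon \cat{C}\op\adjto\cat{D}\op\colon U\op$ exhibits $U\op$ as a monadic right adjoint''—wait, one has to be careful about which functor is the right adjoint after dualizing: the adjunction $F\dashv U$ becomes $U\op\dashv F\op$, so $F\op$ is the \emph{right} adjoint of the dual adjunction. Thus comonadicity of $F$ is exactly monadicity of $F\op\colon\cat{D}\op\to\cat{C}\op$. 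So first I would set up this dictionary carefully, identifying the comparison functor $f$ with the dual of the monadic comparison functor.

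Next I would invoke the precise (crude/Beck) monadicity theorem in the form: a right adjoint $G\colon\cat{A}\to\cat{B}$ is monadic provided $G$ is conservative and $\cat{A}$ has, and $G$ preserves, coequalizers of $G$-split pairs (or, in the ``crude'' version sufficient here, all coequalizers that $G$ reflects, or simply: $\cat{A}$ has coequalizers and $G$ preserves and reflects them). Translating through the duality, the hypotheses needed become: $\cat{C}\op$ has coequalizers, i.e.\ $\cat{C}$ has equalizers; $F\op$ preserves coequalizers, i.e.\ $F$ preserves equalizers; and $F\op$ is conservative, i.e.\ $F$ is conservative. (A conservative functor automatically reflects equalizers among parallel pairs once it preserves and they exist, so the ``reflects'' condition is subsumed.) These are exactly the three hypotheses in the statement, so the conclusion follows.

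Concretely, the key steps in order are: (1) recall/state the dual Beck theorem, phrasing it as: for an adjunction $F\colon\cat{C}\adjto\cat{D}\colon U$, if $\cat{C}$ has equalizers, $F$ preserves equalizers, and $F$ is conservative, then the comparison functor $f\colon\cat{C}\to FU\CComod_{\cat{D}}$ is an equivalence; (2) observe this is the formal dual of the statement that a conservative limit-of-split-pairs–preserving right adjoint is monadic, applied to $F\op\colon\cat{D}\op\to\cat{C}\op$, $U\op\dashv F\op$; (3) check that the dual hypotheses match: $\cat{C}$ has equalizers $\Leftrightarrow$ $\cat{C}\op$ has coequalizers, $F$ preserves equalizers $\Leftrightarrow$ $F\op$ preserves coequalizers, $F$ conservative $\Leftrightarrow$ $F\op$ conservative; (4) conclude by citing Beck's theorem (e.g.\ in the form found in Mac Lane or Borceux). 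One can also note that only the existence and preservation of equalizers of \emph{coreflexive} pairs (the $F$-split ones) is genuinely needed, which is why the hypotheses can be weakened, but the stated hypotheses suffice.

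I do not expect a serious obstacle here; the only real subtlety—and the one place care is warranted in the write-up—is bookkeeping the variance: making sure that after passing to opposite categories the functor playing the role of the monadic right adjoint is $F\op$ (not $U\op$), so that the hypotheses on $F$ are the ones that get used, and that the Eilenberg--Moore category of comodules is correctly identified with the opposite of an Eilenberg--Moore category of algebras. Once that identification is pinned down, the proof is a one-line appeal to the classical theorem.
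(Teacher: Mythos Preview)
Your proposal is correct and follows exactly the paper's approach: the paper's proof is a single sentence invoking the usual Beck monadicity theorem (Mac Lane, \S VI.7) applied to the opposite category. Your write-up simply unpacks the variance bookkeeping that the paper leaves implicit, but the argument is the same.
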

\begin{proof}
This follows from the usual Beck Monadicity theorem~\cite[\S\,VI.7]{MacLane98} applied to the opposite category.
\end{proof}

\begin{Thm}\label{thm:AB-as-comodules}
In~\eqref{eq:main} the functor $\bar{F}$ is comonadic, so $\AB$ is equivalent to the Eilenberg-Moore category of comodules for the comonad~$\bar{F}\bar{U}$.
\end{Thm}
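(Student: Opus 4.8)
The plan is to apply \Cref{Lem:Beck} to the adjunction $\bar{F} \dashv \bar{U}$ of \eqref{eq:main}, so I must verify its three hypotheses: that $\AB$ and $\MF$ have equalizers, that $\bar F$ preserves them, and that $\bar F$ is conservative. The first is immediate since both categories are abelian (indeed Grothendieck), so all limits and colimits exist. For the second, recall from \Cref{lem:main-diagram} that $\bar F$ is exact; an exact functor between abelian categories preserves all finite limits, in particular equalizers (an equalizer being the kernel of a difference map). So the only real content is the third point: conservativity of~$\bar F$.

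For conservativity, I would argue as follows. Again by \Cref{lem:main-diagram}, $\bar F$ is faithful and exact. A faithful exact functor between abelian categories is automatically conservative: if $\bar F(f) $ is an isomorphism, then $\bar F(\ker f) = \ker \bar F(f) = 0$ and $\bar F(\coker f) = \coker \bar F(f) = 0$ by exactness, and a faithful functor detects zero objects (an object $M$ with $\bar F(M)=0$ has $\id_M$ sent to $\id_0 = 0$, hence $\id_M = 0$, hence $M=0$), so $\ker f = \coker f = 0$ and $f$ is an isomorphism. This is the heart of the matter, and it is short; the "main obstacle" is really just making sure the hypotheses of \Cref{lem:main-diagram} are being invoked correctly, namely that the faithfulness and exactness of $\bar F$ there do not secretly depend on assumptions we have dropped. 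The excerpt's proof of \Cref{lem:main-diagram} explicitly notes that Condition~(3) of \cite[Hyp.~6.1]{BalmerKrauseStevenson19} was only needed for faithfulness of~$\bar U$, not~$\bar F$, so we are safe.

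Having checked all three hypotheses, \Cref{Lem:Beck} gives that $\bar F$ is comonadic, i.e.\ the comparison functor $\AB \to \bar F\bar U\text{-}\Comod_{\MF}$ is an equivalence. Since $\bar F\bar U$ is the comonad on $\MF = \Modd\cat F^c$ induced by the adjunction $\bar F \dashv \bar U$, and since one identifies $\bar F\bar U$ with the induced comonad $\widehat{FU}$ on $\MF$ via $\widehat{FU} \cong \hat F\,\hat U$ (as remarked at the end of the proof of \Cref{lem:main-diagram}, passing through the Gabriel quotient), this is exactly the asserted description of~$\AB$. I would close with a sentence recording this identification so that the statement of the introductory \Cref{thm:AB-as-comodules} (phrased in terms of $\widehat{FU}$) matches the body statement (phrased in terms of $\bar F \bar U$).

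\begin{proof}
We verify the hypotheses of \Cref{Lem:Beck} for the adjunction $\bar F\dashv\bar U$ of~\eqref{eq:main}. The categories $\AB$ and $\MF$ are abelian, in fact Grothendieck, so in particular they have equalizers. By \Cref{lem:main-diagram} the functor $\bar F$ is exact; an exact functor between abelian categories preserves finite limits, hence equalizers. It remains to check that $\bar F$ is conservative. Again by \Cref{lem:main-diagram}, $\bar F$ is faithful and exact. Suppose $f$ is a morphism in $\AB$ with $\bar F(f)$ an isomorphism. By exactness $\bar F(\Ker f)=\Ker\bar F(f)=0$ and $\bar F(\coker f)=\coker\bar F(f)=0$. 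A faithful functor detects zero objects: if $\bar F(M)=0$ then $\bar F(\id_M)=0$, so $\id_M=0$ and $M=0$. Hence $\Ker f=0$ and $\coker f=0$, so $f$ is an isomorphism. By \Cref{Lem:Beck}, $\bar F$ is comonadic, so the comparison functor identifies $\AB$ with the Eilenberg--Moore category of comodules for the comonad $\bar F\bar U$ on $\MF$. Finally, passing through the Gabriel quotient $Q\dashv R$ as in \Cref{lem:main-diagram}, this comonad is identified with $\widehat{FU}\cong\hat F\,\hat U$ on $\MF=\Modd\cat F^c$.
\end{proof}
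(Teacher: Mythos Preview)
Your proof is correct and follows the same approach as the paper: apply \Cref{Lem:Beck} using that $\bar F$ is exact and faithful (hence conservative) between abelian categories. The paper's version is a one-line invocation of these facts, while you spell out the standard argument that faithful\,+\,exact implies conservative; also, the identification $\bar F\bar U\cong\widehat{FU}$ that you append is not part of the theorem as stated but is recorded separately in the remark immediately following it.
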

\begin{proof}
The conditions of \Cref{Lem:Beck} hold since $\bar{F}$ is conservative and exact and the categories involved are abelian and hence have all equalizers.
\end{proof}

\begin{Rem}
We have $\widehat{FU}\cong\hat{F}\hat{U}\cong\bar{F}\,Q\,R\,\bar{U}\cong\bar{F}\bar{U}$. The functor $\hat{F}$ is not conservative in general, so $\hat{F}$ itself is not comonadic.
\end{Rem}

In two of the examples at hand, the stable homotopy category and the derived category of a ring, the tensor-triangulated fields~$\cF$ in the picture are semisimple triangulated categories and hence are already abelian. So, the abelian category of (functor) modules on these tensor-triangulated fields is just the tensor-triangulated field~$\cF$ itself, \ie restricted Yoneda $\yoneda\colon \cF\to \cF^c\MMod$ is an equivalence.
\begin{Cor}
\label{cor:SH}%
In the case of $\SH$, the homological residue category~$\AB$ for the homological prime~$\cB$ corresponding to a Morava $K$-theory spectrum $K(p,n)$ is equivalent to the Eilenberg-Moore category of comodules for the comonad $FU$ on the category of graded $K(p,n)_*$-modules, where $FU$ is associated to the free/forgetful adjunction $F\colon\SH\adjto \Ho(K(p,n)\MMod)\cong K(p,n)_*\text{-}\GrMod :U$.
\end{Cor}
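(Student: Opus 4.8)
The plan is to deduce \Cref{cor:SH} directly from \Cref{thm:AB-as-comodules} by identifying the three ingredients in the general statement with the concrete objects appearing in the chromatic setup. First I would recall, as already noted in the excerpt, that the free/forgetful adjunction $F\colon\SH\adjto\Ho(K(p,n)\MMod):U$ places us in the situation of \Cref{lem:main-diagram}: $F$ is the monoidal (by the K\"unneth isomorphism) exact functor, $U$ is its right adjoint, and $\cF=\Ho(K(p,n)\MMod)$ is a tt-field because $K(p,n)_*$ is a graded field, so $\cF\simeq K(p,n)_*\text{-}\GrMod$ is semisimple triangulated and hence abelian. By \Cref{cor:EB-Morava}, the homological prime $\cB$ attached to this $F$ is exactly the one corresponding to $K(p,n)$ in $\Spch(\SH^c)$, so there is no ambiguity in which $\cB$ we are talking about.

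The second step is the observation that when $\cF$ is semisimple triangulated, the abelian category $\MF$ of $\cF^c$-modules on which the comonad $\widehat{FU}\cong\bar F\bar U$ lives is nothing but $\cF$ itself. Concretely, restricted Yoneda $\yoneda\colon\cF\to\cF^c\MMod$ is an equivalence in this case: every object of the tt-field $\cF$ is a coproduct of compacts, and in a semisimple category $\Homcat F(-,X)\restr{\cF^c}$ is an exact functor that is manifestly representable-up-to-coproducts, so the restricted Yoneda functor is fully faithful and essentially surjective. This is precisely the remark made just before the corollary in the excerpt. Under this equivalence $\cF\simeq\MF$, the comonad $\widehat{FU}$ on $\MF$ is transported to the comonad $FU$ on $\cF = K(p,n)_*\text{-}\GrMod$ itself, using the intertwining relations $\hat F\,\yoneda=\yoneda\,F$ and $\hat U\,\yoneda=\yoneda\,U$ from \Cref{lem:main-diagram} together with $\widehat{FU}\cong\hat F\,\hat U$.

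Finally I would invoke \Cref{thm:AB-as-comodules}: it says $\bar F$ is comonadic and $\AB$ is equivalent to the Eilenberg-Moore category of comodules for $\bar F\bar U\cong\widehat{FU}$ on $\MF$. Combining with the identification $\MF\simeq\cF=K(p,n)_*\text{-}\GrMod$ and the transport of the comonad described above, we conclude that $\AB$ is equivalent to the category of comodules for the comonad $FU$ on $K(p,n)_*\text{-}\GrMod$, which is the assertion of \Cref{cor:SH}.

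The one point requiring a little care — the ``main obstacle,'' though it is minor — is the claim that restricted Yoneda $\yoneda\colon\cF\to\cF^c\MMod$ is an equivalence when $\cF$ is a semisimple tt-field, and that this equivalence is compatible with the comonads so that $\widehat{FU}$ corresponds to $FU$. Compatibility is forced by the relations $\hat F\,\yoneda = \yoneda\,F$ and $\hat U\,\yoneda = \yoneda\,U$, since these say the square relating $\yoneda$ on source and target intertwines $(F,U)$ with $(\hat F,\hat U)$, hence intertwines the two comonads; so once the equivalence $\cF\simeq\MF$ is in hand everything else is formal. I would state this identification as a short lemma (or cite it if it already appears in~\cite{BalmerKrauseStevenson19}) and then the corollary is immediate.
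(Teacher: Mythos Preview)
Your proposal is correct and follows essentially the same approach as the paper: apply \Cref{thm:AB-as-comodules} to get $\AB$ as $\widehat{FU}$-comodules over $\MF$, then use that for a semisimple tt-field $\cF$ restricted Yoneda is an equivalence $\cF\simeq\MF$ intertwining $FU$ with $\widehat{FU}$. The paper's proof (which treats \Cref{cor:SH} and \Cref{cor:D(R)} simultaneously) is more terse but identical in substance; your extra care with the intertwining relations $\hat F\,\yoneda=\yoneda\,F$ and $\hat U\,\yoneda=\yoneda\,U$ is exactly what underlies the paper's one-line assertion that ``$\widehat{FU}$ boils down to~$FU$.''
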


\begin{Cor}
\label{cor:D(R)}
In the case of $\Der(R)$ for $R$ a commutative ring, the homological residue category $\AB$ for the homological prime~$\cB$ corresponding to a Zariski point $i\colon\Spec(\kappa(\gp))\to\Spec R$ is equivalent to the Eilenberg-Moore category of comodules for the comonad $\rmL i^*\circ i_*$ on graded $\kappa(\gp)$-modules, where $\rmL i^*\circ i_*$ is associated to the usual adjunction $\rmL i^*\colon\Der(R)\adjto\Der(\kappa(\gp))\cong \kappa(\gp)\text{-}\GrMod:i_*$.
\end{Cor}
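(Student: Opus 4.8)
The plan is to deduce the corollary directly from \Cref{thm:AB-as-comodules}, after making three identifications: that the derived extension-of-scalars functor $\rmL i^*$ fits the framework of \Cref{lem:main-diagram} and produces the homological prime attached to~$\gp$; that the functor-module category $\MF$ over the tt-field $\cF=\Der(\kappa(\gp))$ is just $\cF$ itself; and that under this identification the comonad $\bar F\bar U$ becomes~$\rmL i^*\circ i_*$.

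First I would record that $\Der(\kappa(\gp))$ is indeed a tt-field: since $\kappa(\gp)$ is a field, every object is a coproduct of shifts of $\kappa(\gp)[0]$ and tensoring with a nonzero object is faithful. The functor $F=\rmL i^*$ is monoidal and exact, it preserves compact objects (perfect complexes go to perfect complexes), and it has a right adjoint $U=i_*$, restriction of scalars along $R\to\kappa(\gp)$; so \Cref{lem:main-diagram} applies. The homological prime it produces, $\cB=\Ker(\hat F)\cap\cA\fp$, lies over the tt-prime $\Ker\big(\Dperf(R)\to\Dperf(\kappa(\gp))\big)=\gp$ of $\Spc(\Dperf(R))\cong\Spec R$; since the homological spectrum and the tt-spectrum of $\Der(R)$ coincide by~\cite{BalmerKrauseStevenson19}, this is exactly the homological prime corresponding to the Zariski point~$i$, \ie the $\cB$ already used in \Cref{cor:EB-ring}. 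Next, \Cref{thm:AB-as-comodules} gives that $\bar F$ is comonadic, so $\AB\simeq(\bar F\bar U)\CComod_{\MF}$; and because $\cF$ is semisimple, hence abelian, the restricted Yoneda functor $\yoneda\colon\cF\to\cF^c\MMod=\MF$ is an equivalence (the remark preceding \Cref{cor:SH}), identifying $\MF$ with $\kappa(\gp)\text{-}\GrMod$.

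It then remains to transport the comonad. From $\bar F\bar U\cong\widehat{FU}\cong\hat F\hat U$ (the remark after \Cref{thm:AB-as-comodules}) and the relations $\hat F\,\yoneda=\yoneda\,F$ and $\hat U\,\yoneda=\yoneda\,U$ of \Cref{lem:main-diagram}, the equivalence $\yoneda$ carries the comonad $\bar F\bar U$ on $\MF$ to the comonad $FU=\rmL i^*\circ i_*$ on $\cF\simeq\kappa(\gp)\text{-}\GrMod$; an equivalence of ambient categories carrying one comonad to another induces an equivalence of the associated Eilenberg--Moore categories of comodules, which gives the claim. (The identical argument proves \Cref{cor:SH}, with $\Ho(K(p,n)\MMod)\cong K(p,n)_*\text{-}\GrMod$ in place of $\Der(\kappa(\gp))$.) The only step that is not a bare citation is checking that this last comparison is an equivalence of \emph{comodule} categories and not merely of the underlying categories: one verifies that $\yoneda$ intertwines the units and counits of $F\dashv U$ with those of $\hat F\dashv\hat U$, so that the comultiplications and counits of the two comonads match up. This is formal once one recalls \cite[Construction 6.10]{BalmerKrauseStevenson19}, and I expect it to be the main (minor) technical point; everything else is immediate from the results already established.
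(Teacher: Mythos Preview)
Your proposal is correct and follows essentially the same route as the paper's own proof: apply \Cref{thm:AB-as-comodules}, use semisimplicity of $\cF=\Der(\kappa(\gp))$ to identify $\MF$ with $\cF\cong\kappa(\gp)\text{-}\GrMod$, and observe that under this identification the comonad $\widehat{FU}\cong\bar F\bar U$ becomes $FU=\rmL i^*\circ i_*$. The paper's argument is terser (it treats \Cref{cor:SH} and \Cref{cor:D(R)} simultaneously in two sentences), while you spell out more carefully why the hypotheses of \Cref{lem:main-diagram} are met, why the resulting homological prime is the one over~$\gp$, and why the comonad structures match under restricted Yoneda; these are all points the paper leaves implicit.
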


\begin{proof}[Proof of Corollaries~\ref{cor:SH} and~\ref{cor:D(R)}]
By \Cref{thm:AB-as-comodules} we have that in these two cases the category~$\AB$ is equivalent to $FU$-comodules on $\cF$, where $\cF$ is respectively graded $K(p,n)_*$-modules and graded $\kappa(\gp)$-modules.

So, the claim follows from the fact that if $\cF$ is a semisimple abelian category then $\cF^c\MMod \cong \cF$ and, under this equivalence, the comonad $\widehat{FU}$ boils down to~$FU$.
\end{proof}

%------------------------------------------------------------------------------

% \bibliographystyle{alpha}%
% \bibliography{TG-articles}

\begin{thebibliography}{BIKP18}

\bibitem[ACB19]{AntolinBarthel19}
Omar Antol\'{\i}n-Camarena and Tobias Barthel.
\newblock A simple universal property of {T}hom ring spectra.
\newblock {\em J. Topol.}, 12(1):56--78, 2019.

\bibitem[Bal20a]{Balmer20b}
Paul Balmer.
\newblock Homological support of big objects in tensor-triangulated categories.
\newblock {\em J. \'{E}c. polytech. Math.}, 7:1069--1088, 2020.

\bibitem[Bal20b]{Balmer20a}
Paul Balmer.
\newblock Nilpotence theorems via homological residue fields.
\newblock {\em Tunis. J. Math.}, 2(2):359--378, 2020.

\bibitem[BKS19]{BalmerKrauseStevenson19}
Paul Balmer, Henning Krause, and Greg Stevenson.
\newblock Tensor-triangular fields: ruminations.
\newblock {\em Selecta Math. (N.S.)}, 25(1):25:13, 2019.

\bibitem[BS17]{BalmerSanders17}
Paul Balmer and Beren Sanders.
\newblock The spectrum of the equivariant stable homotopy category of a finite
  group.
\newblock {\em Invent. Math.}, 208(1):283--326, 2017.

\bibitem[BGH20]{BarthelGreenleesHausmann20}
Tobias Barthel, J.~P.~C. Greenlees, and Markus Hausmann.
\newblock On the {B}almer spectrum for compact {L}ie groups.
\newblock {\em Compos. Math.}, 156(1):39--76, 2020.

\bibitem[BIKP18]{BensonIyengarKrausePevtsova18}
Dave Benson, Srikanth~B. Iyengar, Henning Krause, and Julia Pevtsova.
\newblock Stratification for module categories of finite group schemes.
\newblock {\em J. Amer. Math. Soc.}, 31(1):265--302, 2018.



\bibitem[Car83]{Carlson83}
Jon~F. Carlson.
\newblock The varieties and the cohomology ring of a module.
\newblock {\em J. Algebra}, 85(1):104--143, 1983.

\bibitem[CI17]{CarlsonIyengar17}
Jon~F. Carlson and Srikanth~B. Iyengar.
\newblock Hopf algebra structures and tensor products for group algebras.
\newblock {\em New York J. Math.}, 23:351--364, 2017.

\bibitem[FP05]{FriedlanderPevtsova05}
Eric~M. Friedlander and Julia Pevtsova.
\newblock Representation-theoretic support spaces for finite group schemes.
\newblock {\em Amer. J. Math.}, 127(2):379--420, 2005.

\bibitem[FP07]{FriedlanderPevtsova07}
Eric~M. Friedlander and Julia Pevtsova.
\newblock {$\Pi$}-supports for modules for finite group schemes.
\newblock {\em Duke Math. J.}, 139(2):317--368, 2007.

\bibitem[HS98]{HopkinsSmith98}
Michael~J. Hopkins and Jeffrey~H. Smith.
\newblock Nilpotence and stable homotopy theory. {II}.
\newblock {\em Ann. of Math. (2)}, 148(1):1--49, 1998.

\bibitem[Mat17]{Mathew17a}
Akhil Mathew.
\newblock Residue fields for a class of rational {${E}_\infty$}-rings and
  applications.
\newblock {\em J. Pure Appl. Algebra}, 221(3):707--748, 2017.

\bibitem[ML98]{MacLane98}
Saunders Mac~Lane.
\newblock {\em Categories for the working mathematician}, volume~5 of {\em
  Graduate Texts in Mathematics}.
\newblock Springer-Verlag, New York, 1998.

\bibitem[W{\"{u}}r86]{Wurgler86}
Urs W{\"{u}}rgler.
\newblock Commutative ring-spectra of characteristic {$2$}.
\newblock {\em Comment. Math. Helv.}, 61(1):33--45, 1986.

\end{thebibliography}

%------------------------------------------------------------------------------
%------------------------------------------------------------------------------
\end{document}